\newtheorem{thm}{\bf Theorem}[section]
\newtheorem{lem}[thm]{\bf Lemma}
\newtheorem{prop}[thm]{\bf Proposition}
\theoremstyle{definition}
\newtheorem{defn}[thm]{Definition}
\newtheorem{rem}[thm]{Remark}
\numberwithin{equation}{section}
\def\C{\mathbb{C}}
\def\N{\mathbb{N}}
\def\R{\mathbb{R}}
\def\Z{\mathbb{Z}}
\def\S{\mathfrak{S}}
\def\B{\mathcal{B}}
\def\I{\mathcal{I}}
\def\pos{\mathrm{pos}}
\newcommand{\des}{{\rm des}}
\newcommand{\maj}{{\rm maj}}
\newcommand{\fmaj}{{\rm fmaj}}
\newcommand{\Neg}{{\rm neg}}
\def\Des{\mathop{\rm Des}\nolimits}
\title{Some identities involving $q$-Stirling numbers\\ 
of the second kind   in type B}
\author{Ming-Jian Ding\authornote{1}
\and
Jiang Zeng\authornote{2}
}
\email{ding-mj@hotmail.com}).}
\email{zeng@math.univ-lyon1.fr}).}
\begin{document}
\maketitle
\begin{abstract}
The recent interest in type B 
$q$-Stirling numbers of the second kind  prompted us to give a type  B analogue of a classical identity connecting the $q$-Stirling numbers of the second kind and Carlitz's major $q$-Eulerian numbers, which turns out to be a $q$-analogue of an identity due to Bagno, Biagioli and Garber. We provide a combinatorial proof  of this identity and an algebraic  proof of  a more general identity for colored permutations.
In addition, we  prove some $q$-identities about the $q$-Stirling numbers of the second kind in types A, B and D.
\end{abstract}

\section{Introduction}
The Stirling number of the second kind, denoted $S(n,k)$,
is the number of ways to partition $n$ distinct objects into $k$ nonempty subsets.
It satisfies the well-known triangular recurrence
\begin{equation*}
  S(n,k)=S(n-1,k-1)+kS(n-1,k)
\end{equation*}
with the initial conditions $S(0,k)=\delta_{0k}$, where $\delta_{ij}$ is the Kronecker delta.
Carlitz \cite{Car48} introduced the type A $q$-Stirling numbers of the second kind $S[n,k]$ by
\begin{equation}\label{rec+stirling+q+A}
  S[n,k] := S[n-1,k-1]+[k]_q\,S[n-1,k],
\end{equation}
where $[k]_q := 1 + q + q^2 +\dots+ q^{k-1}$
for $k\geq 1$ and $[0]_q := 0$, and  $S[0,k]=\delta_{0k}$.

Let $\S_n$ be the symmetric group on the set $[n]=\{1,2,\ldots,n\}$.
An element $\pi \in \S_n$ is written as $\pi=\pi_1\pi_2\cdots \pi_n$.
The descent set of $\pi \in \S_n$ is defined by
\begin{equation*}
\Des(\pi):=\{ i \in [n-1]~|~ \pi_i > \pi_{i+1} \}
\end{equation*}
and the cardinality of $\Des(\pi)$ is called
the number of descents of $\pi$, denoted  $\des(\pi)$.
The Eulerian number $A_{n,k}$ is  the number of $\pi \in \S_n$ with $k$ descents.
There exists a well-known identity connecting the Stirling numbers of the second kind and Eulerian numbers as follows:
\begin{equation}\label{eq+eulerian+stir+A}
  k!S(n,k)=\sum_{\ell=1}^kA_{n,\ell-1}\binom{n-\ell}{k-\ell}
\end{equation}
for all nonnegative integers $0 \leq k \leq n$.
A combinatorial proof of identity~\eqref{eq+eulerian+stir+A} in terms of the ordered set partitions and permutations is quite easy and well known, see \cite[Theorem 1.17]{Bo04}, for example. 

The $q$-binomial coefficients are defined  for $n, k \in \N$ by
\begin{equation*}
   {n \brack k}_{q} := \frac{[n]_q!}{[k]_q![n-k]_q!}\quad\text{for}\quad 0 \leq k \leq n,
\end{equation*}
 where $[n]_q! := [1]_q[2]_q \cdots[n]_q$ is the $q$-factorial of $n$.
To give a $q$-analogue of identity \eqref{eq+eulerian+stir+A}  we need to find a suitable \emph{Mahonian statistic} 
 over permutations, that is, a statistic  whose generating function 
  over $\S_n$ is $[n]_q!$. It turns out that MacMahon's 
 \emph{major index}~\cite{Mac60} is a good fit for our $q$-analogue. Recall that  
 the  \emph{major index} ($\maj$)  of  $\pi \in \S_n$  is defined by
\begin{equation*}
  \maj(\pi) := \sum_{i \in \Des(\pi)}i.
\end{equation*}
We define the corresponding  $q$-analogue of Eulerian polynomial (of type A) by
\begin{equation}\label{def+q+eulerian+poly+A}
  A_n(t,q) := \sum_{\pi \in \S_n}t^{\des(\pi)}q^{\maj(\pi)}
            = \sum_{k=0}^nA_{n,k}(q)t^k.
\end{equation}
The reader is referred to \cite{Fo10, peter15} and 
 references therein  for further  $q$-Eulerian polynomials. 

Using analytic method, Zeng and Zhang \cite[Proposition 4.5]{ZZ94} proved
the following  $q$-analogue of identity~\eqref{eq+eulerian+stir+A}~\footnote{Proposition 4.5 in \cite{ZZ94} is actually a \emph{fractional version} of~\eqref{eq+eulerian+stir+A+q} and valid for $n\in \C$.}
\begin{equation}\label{eq+eulerian+stir+A+q}
    q^{\binom{k}{2}}[k]_q!S[n,k]
  = \sum_{\ell=1}^kq^{k(k-\ell)}A_{n,\ell-1}(q){n-\ell \brack k-\ell}_{q}
\end{equation}
for nonnegative integers
$0\leq k \leq n$. In 1997, in order  to  give a combinatorial proof of~\eqref{eq+eulerian+stir+A+q}, 
Steingr\'{\i}msson \cite{Stein20} proposed several statistics on ordered set partitions and conjectured that 
their generating functions  were given by either side of~\eqref{eq+eulerian+stir+A+q}.  In the following years,
 Zeng et al.~\cite{KZ02,IKZ08,KZ09} confirmed  all his conjectures, and finally 
Remmel and Wilson~\cite[Section 5.1]{RW15} found
 a combinatorial proof of~\eqref{eq+eulerian+stir+A+q}
using  the major index on the starred permutations.

This paper arose from the desire to give a type B analogue of \eqref{eq+eulerian+stir+A+q}.
In analogy with the usual (type A) Stirling numbers of the second kind (see  \cite{Za81,DL94,BBG19,SS22}),
the \emph{type B  Stirling numbers of the second kind} $S_B(n,k)$  can be defined by
\begin{equation*}
  S_B(n,k) := S_B(n-1,k-1)+ (2k+1)S_B(n-1,k)
\end{equation*}
with the initial conditions $S_B(0,k)=\delta_{0k}$.

For integer $i \in \Z$ we denote
its opposite integer  $-i$ by $\overline{i}$.
Let $\B_n$ be the group of signed permutations of $[n]$,
i.e., the set of all permutations on the set $[\pm n]:=\{\overline{n},\ldots,\overline{1},1,\ldots,n\}$
such that $\pi(\overline{i})=\overline{\pi(i)}$.
In what follows, we write $\pi(i)$ as $\pi_i$ for
$i\in [\pm n]$ and use the \emph{natural order}
on $\langle n\rangle:=\{\overline{n},\ldots,\overline{1},0,1,\ldots,n\}$, namely,
\begin{equation*}
  \overline{n} < \cdots < \overline{1} < 0 < 1 < \cdots < n.
\end{equation*}
The type B descent set of $\pi \in \B_n$ \cite[Section 11.5.2]{peter15} is defined by
\begin{equation*}
\Des_B(\pi) = \{i \in \{0\}\cup[n-1] ~|~ \pi_i > \pi_{i+1}\},
\end{equation*}
with $\pi_0=0$, and the cardinality of  $\Des_B(\pi)$ is called
the number of type B descents of $\pi$, denoted $\des_B(\pi)$.

Let  $B_{n,k}$ be  the number of permutations in $\B_n$ with $k$ descents.
By a bijection between the set of ordered type B set partitions and the set of signed permutations with separators, Bagno, Biagioli and Garber~\cite{BBG19} combinatorially proved the following type B analogue of~\eqref{eq+eulerian+stir+A}:
\begin{equation}\label{eq+eulerian+stir+B}
  2^kk!\,S_B(n,k)=\sum_{\ell=0}^kB_{n,\ell}\binom{n-\ell}{k-\ell}
\end{equation}
for all nonnegative integers $0 \leq k \leq n$.

Recently Sagan and Swanson~\cite{SS22} studied
the \emph{type B $q$-Stirling numbers of the second kind $S_B[n,k]$},
which are defined by the recurrence relation
\begin{equation}\label{def+stirling+B+q}
  S_B[n,k] := S_B[n-1,k-1]+[2k+1]_q\,S_B[n-1,k]
\end{equation}
with the initial conditions $S_B[0,k]=\delta_{0k}$,
see \cite[Section 1.10]{SW21} and \cite{BGK22} for related works.

\begin{rem}
Chow-Gessel~\cite[Eq. (18) and Proposition 4.2]{CG07} defined  a kind of type B $q$-Stirling numbers of the second kind $S_{n,k}(q)$ by the following recurrence relation
\begin{equation*}
S_{n,k}(q):=q^{2k-1}(1+q)S_{n-1,k-1}(q)+[2k+1]_qS_{n-1,k}(q)
\end{equation*}
with the initial conditions $S_{n,0}(q)=1$ for $n \geq 0$.
It is routine to verify that the above two 
types B $q$-Stirling numbers of the second kind are related as follows
\begin{equation}
S_{n,k}(q)=(1+q)^{k} q^{k^2}S_B[n,k].
\end{equation}
\end{rem}

Adin and Roichman~\cite{AR01} defined the \emph{flag-major index} of $\pi \in \B_n$ as follows
\begin{equation}\label{def:fmaj}
  \fmaj(\pi) := \sum_{i \in \Des_B(\pi)}2i+\Neg(\pi),
\end{equation}
where $\Neg(\pi)$ is the number of negative elements in $\pi$, i.e., $|\{i\in [n]: \pi_i < 0\}|$.
Then, as a $q$-analogue of Eulerian polynomial of type B,
Chow and Gessel~\cite{CG07} studied the enumerative polynomials of statistic $(\des_B, \fmaj)$ over $\B_n$,
\begin{equation}\label{def+q+eulerian+poly+B}
  B_n(t,q) := \sum_{\pi \in B_n}t^{\des_B(\pi)}q^{\fmaj(\pi)}= \sum_{k=0}^nB_{n,k}(q)t^k.
\end{equation}

In this paper, using Sagan and Swanson's $q$-Stirling numbers of the second kind in type B \cite{SS22}
and Chow and Gessel's $q$-Eulerian numbers of type B,
we prove  a $q$-analogue of Bagno et al.'s identity~\eqref{eq+eulerian+stir+B}. The following is our first main  result.

\begin{thm}\label{thm+stirling+frobenius+B}
For $0 \leq k \leq n$ we have
\begin{equation}\label{eq+eulerian+stir+B+q+trans}
    [2]^k[k]_{q^2}!S_B[n,k]
  = \sum_{\ell=0}^{k}q^{k(k-2\ell)}B_{n,\ell}(q)
    {n-\ell \brack k-\ell}_{q^2}.
\end{equation}

\end{thm}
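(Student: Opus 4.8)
The plan is to mimic the type A proof strategy (the one behind \eqref{eq+eulerian+stir+A+q}) in two ways: first via a combinatorial model for each side, and second via an algebraic manipulation of the defining recurrences together with a $q$-binomial identity. Let me sketch the combinatorial route, since that is the one that generalizes Bagno--Biagioli--Garber's bijective proof of \eqref{eq+eulerian+stir+B}.

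\textbf{Step 1: Identify a $q$-weighted model for the left-hand side.} The quantity $[2]^k[k]_{q^2}!\,S_B[n,k]$ should count \emph{ordered type B set partitions} of $[\pm n]$ (in the sense of \cite{BBG19,SS22}) into $k$ nonzero blocks plus a ``zero block'', weighted by an appropriate statistic. Concretely, the recurrence \eqref{def+stirling+B+q} suggests that inserting the element $n$ into an ordered type B partition of $[\pm(n-1)]$ with $k$ blocks contributes a factor $[2k+1]_q$, one of $2k+1$ ``slots'' weighted $q^0,q^1,\dots,q^{2k}$; and the prefactor $[2]^k[k]_{q^2}! = \prod_{j=1}^k[2]_q\,[2j]_{q^2}/[2]_{q^2}\cdot\!\dots$ — more cleanly, $[2]^k[k]_{q^2}! = \prod_{j=1}^k(1+q)[2j]_{q^2}\big/\!\!\dots$; rather, write $[2]^k[k]_{q^2}! = \prod_{j=1}^k (1+q)\,[2(j)]_{q^2}\big/[2]_{q^2}$ is awkward, so I would instead note $[2]^k[k]_{q^2}! = \prod_{j=1}^k\bigl([2j]_{q^2}/[2]_{q^2}\bigr)\cdot[2]_{q^2}^{?}$ — the clean statement is $[2]^k[k]_{q^2}! = \prod_{j=1}^k\bigl(q^{0}+q^{1}\bigr)\bigl(q^{0}+q^{2}+\cdots+q^{2j-2}\bigr)$, encoding the ordering of the $k$ blocks together with a choice of sign/flag on each. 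So the left side is $\sum q^{\stati}$ over ordered, signed, flagged set partitions, where $\stati$ combines the insertion statistic with $\fmaj$-type contributions.

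\textbf{Step 2: Decompose by the number of ``true'' descents.} Following the classical proof of \eqref{eq+eulerian+stir+A}, I would set up a bijection between such weighted ordered set partitions with $k$ blocks and pairs $(\pi,\text{a composition})$, where $\pi\in\B_n$ has $\ell$ type B descents and one chooses $k-\ell$ of the remaining $n-\ell$ ``ascent positions'' (including the position $0$) to insert block separators. The separators at genuine descents of $\pi$ are forced; the extra $k-\ell$ separators are free. On the $q$-level, inserting a separator into the $i$-th gap of the one-line word multiplies the weight by a power of $q$ read off from the letters to the right; summing over all ways to place $k-\ell$ extra separators among $n-\ell$ ascent gaps produces exactly the Gaussian binomial ${n-\ell\brack k-\ell}_{q^2}$ (the base is $q^2$ because in type B each unsigned ``unit'' of major index carries weight $q^2$, matching the $2i$ in \eqref{def:fmaj}), while the forced separators together with the negative-letter count assemble into $q^{\fmaj(\pi)}$, and a global shift accounts for the $q^{k(k-2\ell)}$ prefactor.

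\textbf{Step 3 (algebraic fallback): induction on $n$.} If the bijection is delicate, I would instead prove \eqref{eq+eulerian+stir+B+q+trans} by induction on $n$, using \eqref{def+stirling+B+q} on the left and the known recurrence for $B_n(t,q)$ (Chow--Gessel give $B_{n,\ell}(q)$ a recurrence of the form $B_{n,\ell}(q)=[2\ell+1]_q B_{n-1,\ell}(q)+\bigl(q^{2\ell}[2(n-\ell)+1]_q - [2\ell+1]_q\cdot q^{?}\bigr)\cdots$ — precisely, $B_{n,\ell}(q)=[2\ell+1]_qB_{n-1,\ell}(q)+q^{2\ell-1}[2]_q[n-\ell]_{q^2}\,B_{n-1,\ell-1}(q)$ or the analogous relation in their paper). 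Substituting and using the Pascal-type recurrences for ${n-\ell\brack k-\ell}_{q^2}$ reduces the identity to a polynomial identity in $q$ that can be checked by matching coefficients of $S_B[n-1,k-1]$ and $S_B[n-1,k]$ separately.

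\textbf{Main obstacle.} The hard part will be pinning down the exact statistic on ordered signed set partitions whose generating function is the left-hand side, and checking that placing the $k-\ell$ free separators really yields ${n-\ell\brack k-\ell}_{q^2}$ with the correct $q$-shift — i.e., getting all the powers of $q$ (the $q^{k(k-2\ell)}$ global factor, the base-$q^2$ phenomenon, and the $\Neg(\pi)$ term inside $\fmaj$) to line up simultaneously. This bookkeeping, rather than any single conceptual step, is where the proof will stand or fall; the algebraic induction of Step 3 is a safe backup but is less illuminating.
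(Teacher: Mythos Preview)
Your plan is in the right neighbourhood but diverges from the paper in two essential ways, and the gap you flag as ``the hard part'' is exactly where the paper does something you do not anticipate.

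\medskip
\textbf{Combinatorial route.} You propose to $q$-ify Bagno--Biagioli--Garber's bijection directly: write each block in increasing order, so that block separators sit at every descent of the underlying signed permutation (forced) plus $k-\ell$ freely chosen ascent positions, and hope the free choices produce ${n-\ell\brack k-\ell}_{q^2}$. The paper instead generalises Remmel--Wilson: each block is written in \emph{decreasing} order, so within-block positions are \emph{starred descents}. An ordered type~B set partition with $k$ nonzero blocks becomes a pair $(\pi,S)\in\B^{>}_{n,n-k}$ with $S\subseteq\Des_B(\pi)$, $|S|=n-k$. The statistic is pinned down explicitly as
\[
\fmaj((\pi,S))=\fmaj(\pi)-\sum_{j\in S}\bigl(2|\Des_B(\pi)\cap\{j,\ldots,n-1\}|-1\bigr),
\]
and the recurrence matching $[2]_q^k[k]_{q^2}!S_B[n,k]$ is then proved via insertion maps $\phi^{|}_{\alpha,k},\phi^{*}_{\alpha,k}$ that track how $\fmaj((\pi,S))$ changes when $n$ or $\overline n$ is inserted (Lemmas~\ref{lem+fmaj+bar} and~\ref{lem+fmaj+star}, Proposition~\ref{prop+rec+maj+B}).

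The crucial point you miss: in the descent-starred model the underlying permutation has $n-\ell$ descents, so expanding $B^{\fmaj}_{n,n-k}(q)$ by descent number yields $B_{n,n-\ell}(q)$, not $B_{n,\ell}(q)$. The paper therefore needs the $q$-symmetry
\[
B_{n,k}(q)=q^{2nk-n^2}B_{n,n-k}(q),
\]
proved separately (Proposition~\ref{prop+iden+B+fmaj}) via an explicit involution $\psi$ on $\B_n$ and a sign-pattern analysis (Lemmas~\ref{prop+bijection+B} and~\ref{prop+iden+B+fmaj+PN}). This intermediate step is not visible from your outline; if your ascent-separator approach could be made to work it would sidestep it, but since you never define the statistic there is no way to check. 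Your Step~1 never converges on a formula for $\stati$, and without one Step~2 is only a heuristic.

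\medskip
\textbf{Algebraic route.} Your Step~3 proposes induction on $n$ using a Chow--Gessel recurrence for $B_{n,\ell}(q)$ together with the $q$-Pascal rule. The paper does not do this. Its algebraic proof (for general $r$, specialising to $r=2$) combines two generating-function identities: Carlitz's formula $\sum_{m\ge0}[rm+1]_q^n t^m=A^r_n(t,q)/\prod_{i=0}^n(1-tq^{ri})$ and the expansion $t^n=\sum_k S_r[n,k](t)^r_{k,q}$, then extracts coefficients and reduces everything to a $q$-Chu--Vandermonde summation. No recurrence for $B_{n,\ell}(q)$ is invoked.

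\medskip
\textbf{Summary.} The genuine gap is that your statistic on ordered signed set partitions is never written down; the paper's experience shows that the bookkeeping is delicate enough to require both an explicit formula for $\fmaj((\pi,S))$ and the $q$-symmetry of $B_{n,k}(q)$ as an auxiliary result. Your algebraic fallback is a different (and plausibly workable) method from the paper's generating-function argument, but as stated the recurrence you quote for $B_{n,\ell}(q)$ is imprecise and would need to be checked.
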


We shall provide a combinatorial proof for Theorem \ref{thm+stirling+frobenius+B} in Section \ref{section+combinatorial+proof}.
In Section \ref{section+generalization},
we define a $q$-Stirling numbers of the second kind in type D and give $q$-analogues of
some known identities connecting the Stirling numbers of the second kind in types A, B and D.
Next, we prove algebraically a general identity (see Theorem \ref{thm+stirling+frobenius+r})
between the $r$-colored $q$-Stirling numbers of the second kind and $q$-Eulerian numbers of colored permutations
in Section \ref{section+algebraic+proof}.
Note that the proof of Theorem \ref{thm+stirling+frobenius+r} yields another  proof of Theorem \ref{thm+stirling+frobenius+B}.

\section{Combinatorial proof of Theorem~\ref{thm+stirling+frobenius+B}}\label{section+combinatorial+proof}

In this section, we give a combinatorial proof of \eqref{eq+eulerian+stir+B+q+trans}
by generalizing Remmel and Wilson's proof of identity~\eqref{eq+eulerian+stir+A+q} in \cite{RW15}.
Our strategy is to study the polynomial
\begin{equation}
  \sum_{\pi \in \B_n}q^{\fmaj(\pi)}\prod\limits_{i=1}^{\des_B(\pi)}\left(1+\frac{z}{q^{2i-1}} \right)
\end{equation}
in $\R[q][z]$ and interpret the coefficient of $z^k$ combinatorially in two different ways.

\subsection{Permutations of type B}
For any $\pi=\pi_1\pi_2\cdots\pi_n \in \B_n$,
we say that an index $i\in [n-1]$ has $\pi$-\emph{sign type} $++$ (resp., $--$, $+-$, $-+$)
if the sign of $\pi_i$ is positive (resp., negative, positive, negative) and that of
$\pi_{i+1}$ is positive (resp., negative, negative, positive).

In the rest of this section, we denote by $\Pi_1$ (resp., $\Pi_2$, $\Pi_3$)
the set of descents of $\pi$ with $\pi$-sign type $++$ (resp., $--$, $+-$)
and by $\Pi'_1$ (resp., $\Pi'_2$, $\Pi'_3$) the set of ascents of $\pi$ with $\pi$-sign type $++$ (resp., $--$, $-+$).

For any $\pi \in \B_n$, define the mapping $\psi:\pi\to \widetilde{\pi} $ on $\B_n$ by
\begin{equation*}
     \widetilde{\pi}_i
 := \left\{
   \begin{array}{lll}
      \pi_{n+1-i}-n-1,  &  \text{if} ~ \pi_{n+1-i} > 0; \\
      \pi_{n+1-i}+n+1,  &  \text{if} ~ \pi_{n+1-i} < 0. \\
   \end{array}
   \right.
\end{equation*}
For example, if $\pi=1\,5\,\overline{3}\,4\,6\,\overline{2}$, then $\widetilde{\pi}=5\,\overline{1}\,\overline{3}\,4\,\overline{2}\,\overline{6}$.

\begin{rem}
Let $r: \pi\mapsto \pi^r$ be the \emph{reversing operator} on $\B_n$
defined by $\pi^r_i=\pi_{n+1-i}$  and
$c: \pi \mapsto \pi^c$  the type B \emph{completion operator} on $\B_n$ defined by $\pi^c_i=\varepsilon_i\cdot (n+1-|\pi_i|)$, where
$\varepsilon_i=1$ if $\pi_i<0$ and $-1$ if
$\pi_i>0$ for $i\in [n]$. It is easy to verify that
$\widetilde{\pi}=(\pi^r)^c$.
\end{rem}

Clearly, if $i$ is a descent (resp., an ascent) position in $\pi \in \B_n$ and the product of $\pi_i$ and $\pi_{i+1}$ is positive,
then $n-i$ is an ascent (resp., a descent) position in $\widetilde{\pi}$;
if $i$ is a descent (resp., an ascent) position in $\pi \in \B_n$ and the product of $\pi_i$ and $\pi_{i+1}$ is negative,
then $n-i$ is a descent (resp., an ascent) position in $\widetilde{\pi}$.

In fact, the mapping $\psi$ is a bijection between all permutations in $\B_n$ with $k$ descents
and all permutations in $\B_n$ with $n-k$ descents by the following result.

\begin{lem}\label{prop+bijection+B}
The mapping $\psi$ is a bijection on $\B_n$ such that
for any $\pi \in \B_n$, we have $\des_B(\widetilde{\pi})=n-\des_B(\pi)$.
\end{lem}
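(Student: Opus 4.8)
The plan is to verify directly that $\psi$ is an involution-like bijection (in fact $\psi = r \circ c$ as noted in the preceding remark, and both $r$ and $c$ are bijections on $\B_n$, so $\psi$ is automatically a bijection) and then to track the descent count under $\psi$ using the sign-type analysis already set up. First I would confirm that $\widetilde{\pi}$ is a well-defined element of $\B_n$: the map sends $\{\pm 1,\dots,\pm n\}$ to itself because $\pi_{n+1-i} > 0$ forces $\pi_{n+1-i} \in [n]$ and hence $\pi_{n+1-i}-n-1 \in \{\overline{n},\dots,\overline{1}\}$, and symmetrically for the negative case; the resulting values are distinct because $|\widetilde{\pi}_i| = n+1-|\pi_{n+1-i}|$ and $|\pi_1|,\dots,|\pi_n|$ is a permutation of $[n]$; and the sign condition $\widetilde{\pi}_{\overline i} = \overline{\widetilde\pi_i}$ is inherited from that of $\pi$. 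Invertibility is then immediate from the factorization $\widetilde{\pi}=(\pi^r)^c$ together with the fact that $r$ is an involution and $c$ is an involution on $\B_n$.

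The substantive part is the descent-count identity. I would partition the set $\{0,1,\dots,n-1\}$ of possible descent positions of $\pi$ into the position $0$ and the positions in $[n-1]$, and analyze each. For $i \in [n-1]$: using the four sign types, the excerpt already records that when $\pi_i\pi_{i+1}>0$ a descent at $i$ becomes an ascent at $n-i$ in $\widetilde\pi$ and vice versa, while when $\pi_i\pi_{i+1}<0$ a descent stays a descent and an ascent stays an ascent at position $n-i$. The key observation to make precise is that as $i$ ranges over $[n-1]$, $n-i$ also ranges over $[n-1]$, so positions in $[n-1]$ are permuted among themselves. Counting: if $a$ of the positions $i\in[n-1]$ satisfy $\pi_i\pi_{i+1}>0$ and are descents, $a'$ are ascents of that sign type, $b$ are descents with $\pi_i\pi_{i+1}<0$, and $b'$ ascents of that type, then $a+a'+b+b' = n-1$, the number of descents of $\pi$ in $[n-1]$ is $a+b$, and the number of descents of $\widetilde\pi$ in $[n-1]$ is $a'+b = (n-1-a-b-b')+b = n-1-a-b'$. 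So I still need to account for the position-$0$ contributions to close the arithmetic.

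For the position $0$: $0\in\Des_B(\pi)$ iff $\pi_1 < 0$, and $0\in\Des_B(\widetilde\pi)$ iff $\widetilde\pi_1 < 0$, i.e.\ iff $\pi_n > 0$ (by the definition of $\psi$, $\widetilde\pi_1$ has the opposite sign of $\pi_n$). Meanwhile the position $n-1+1 = n$ is never a descent position, but the sign of $\pi_n$ does enter: the cleanest bookkeeping is to note $n\notin\Des_B$ always, so effectively what I want is the identity relating $\{$sign of $\pi_1$, sign of $\pi_n\}$ to the count $b'$ (ascents of type with negative product). The hard part will be organizing this boundary bookkeeping so that everything telescopes to $\des_B(\widetilde\pi) = n - \des_B(\pi)$; I expect the slick way is to forget the explicit counting and instead argue positionally. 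Define for $\pi\in\B_n$ the set $T(\pi)\subseteq\{0,1,\dots,n-1\}$ and show $\psi$ induces a bijection $\{0,\dots,n-1\}\setminus\Des_B(\pi) \to \Des_B(\widetilde\pi)$ directly: send $0$ to... — here one must be careful, since $0$ has no natural "$n-0$" partner inside $[n-1]$. The resolution is that the descent at position $0$ of $\pi$ (governed by the sign of $\pi_1$) trades with the sign of $\widetilde\pi_{n}$, equivalently with whether position $n-1$ behaves a certain way; I would set this up by also considering the "virtual" descent/ascent at the right end. I anticipate this endpoint matching is the only genuinely delicate step, and once it is pinned down the identity $\des_B(\widetilde\pi) = n - \des_B(\pi)$ follows by adding the $[n-1]$-count $a'+b$ to the position-$0$ count and simplifying using $a+a'+b+b'=n-1$.
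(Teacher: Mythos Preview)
Your framework is exactly the paper's: decompose the positions $i\in[n-1]$ by the $\pi$-sign type, let (in your notation) $a=|\Pi_1|+|\Pi_2|$, $a'=|\Pi'_1|+|\Pi'_2|$, $b=|\Pi_3|$, $b'=|\Pi'_3|$, observe that the interior descents of $\widetilde\pi$ number $a'+b$, and handle position $0$ via the sign of $\pi_n$. But the closing sentence contains a genuine gap: the relation $a+a'+b+b'=n-1$ together with the position-$0$ contribution is \emph{not} enough to simplify $a'+b+[\pi_n>0]$ to $n-(a+b+[\pi_1<0])$. After substituting $a'=n-1-a-b-b'$ you are left needing
\[
b-b' \;=\; 1-[\pi_n>0]-[\pi_1<0]\;=\;[\pi_n<0]-[\pi_1<0],
\]
and nothing in your outline supplies this. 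It is precisely this identity --- that the difference between the number of $+-$ transitions and the number of $-+$ transitions in the sign string of $\pi$ is determined by the signs of $\pi_1$ and $\pi_n$ --- that the paper isolates and verifies by its four-case analysis (their statements ``$|\Pi_3|=|\Pi'_3|$'', ``$|\Pi_3|=|\Pi'_3|+1$'', etc.). Your attempt at a direct bijection $\{0,\dots,n-1\}\setminus\Des_B(\pi)\to\Des_B(\widetilde\pi)$ stalls for the same reason: the interior positions match up nicely but the boundary bookkeeping only closes once you invoke that sign-transition count.

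So the approach is the paper's approach, and the missing ingredient is small and easy to supply (it is a one-line observation about sign strings), but as written your proposal does not contain it and misidentifies what remains: the ``endpoint matching'' is not just about position $0$, it is about the extra relation linking $b$, $b'$, $\operatorname{sgn}\pi_1$, and $\operatorname{sgn}\pi_n$.
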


\begin{proof}
It is convenient to associate  a permutation in $\B_n$ with  a character string in  $\{+, -\}^n$ by replacing each
 positive (resp., negative) element  with $+$ (resp., $-$).
 For example, the string for permutation $1\,5\,\overline{3}\,4\,6\,\overline{2}$ 
 is  $++-++-$.
Let $\pi \in \B_n$ with $\des_B(\pi)=k$.
We consider the following four cases in terms of the signs of $\pi_1$ and $\pi_n$.
\begin{enumerate}[label=\roman*)]
\item  [\rm (i)] If  $\pi_{1} > 0$ and $\pi_{n} > 0$, then
\begin{equation*}
   |\Pi_1|+|\Pi_2|+|\Pi_3|=k ~\text{and}~|\Pi'_1|+|\Pi'_2|+|\Pi'_3|=n-k-1.
\end{equation*}
In addition, $|\Pi_3|$ (resp., $|\Pi'_3|$) is the number of $+-$ (resp., $-+$) occurring in the character string of  $\pi$.
Obviously, we have $|\Pi_3|=|\Pi'_3|$ since $\pi_{1} > 0$ and $\pi_{n} > 0$.
For the permutation $\widetilde{\pi}=\psi(\pi)$,
it is easy to see that 
$
(n-\Pi'_1)\cup( n-\Pi'_2)\cup(n-\Pi_3)
$
is a subset of descent positions in $\widetilde{\pi}$,
where $n-\Pi$ denotes the set $\{n-i \,|\, i \in \Pi\}$.
Note that $0$ is also a descent position in $\widetilde{\pi}$ since $\widetilde{\pi}_1 =\pi_n-n-1 < 0$, hence
\begin{equation*}
  \des_B(\widetilde{\pi})= 1 + |n-\Pi'_1|+|n-\Pi'_2|+|n-\Pi_3| = 1+ |\Pi'_1|+|\Pi'_2|+|\Pi'_3| =n-k.
\end{equation*}
\item  [\rm (ii)] If $\pi_{1} > 0$ and $\pi_{n} < 0$, then
\begin{equation*}
  |\Pi_1|+|\Pi_2|+|\Pi_3|=k,\, |\Pi'_1|+|\Pi'_2|+|\Pi'_3|=n-k-1~\text{and}~|\Pi_3|=|\Pi'_3|+1.
\end{equation*}
Hence, we have
\begin{equation*}
  \des_B(\widetilde{\pi})= |n-\Pi'_1|+|n-\Pi'_2|+|n-\Pi_3| = |\Pi'_1|+|\Pi'_2|+|\Pi'_3|+1 = n-k.
\end{equation*}
\item [\rm (iii)] If $\pi_{1} < 0$ and $\pi_{n} > 0$, then
\begin{equation*}
  |\Pi_1|+|\Pi_2|+|\Pi_3|=k-1,\,|\Pi'_1|+|\Pi'_2|+|\Pi'_3|=n-k~\text{and}~|\Pi_3|=|\Pi'_3|-1.
\end{equation*}
Note that $0$ is a descent position since $\widetilde{\pi}_1 = \pi_n-n-1 < 0$, hence
\begin{equation*}
  \des_B(\widetilde{\pi})= 1+|n-\Pi'_1|+|n-\Pi'_2|+|n-\Pi_3| = |\Pi'_1|+|\Pi'_2|+|\Pi'_3| = n-k.
\end{equation*}
\item [\rm (iv)] If $\pi_{1} < 0$ and $\pi_{n} < 0$, then
\begin{equation*}
  |\Pi_1|+|\Pi_2|+|\Pi_3|=k-1,\,|\Pi'_1|+|\Pi'_2|+|\Pi'_3|=n-k ~\text{and}~|\Pi_3|=|\Pi'_3|,
\end{equation*}
which implies that
\begin{equation*}
  \des_B(\widetilde{\pi})= |n-\Pi'_1|+|n-\Pi'_2|+|n-\Pi_3| = |\Pi'_1|+|\Pi'_2|+|\Pi'_3| = n-k.
\end{equation*}
\end{enumerate}
Summarising  the above four cases we are done.
\end{proof}

\begin{lem}\label{prop+iden+B+fmaj+PN}
Let $\pi \in \B_n$ and $\Neg(\pi)=m$.
\begin{itemize}
  \item [\rm (a)] If $\pi_{n} < 0$, then $\sum_{i \in \Pi_3}i + m = \sum_{i \in \Pi'_3} i + n$;
  \item [\rm (b)] If $\pi_{n} > 0$, then $\sum_{i \in \Pi_3}i + m = \sum_{i \in \Pi'_3} i$.
\end{itemize}
\end{lem}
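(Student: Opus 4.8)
The plan is to recast both sides of the asserted equalities purely in terms of the \emph{sign string} $s(\pi)=s_1\cdots s_n\in\{+,-\}^n$, where $s_i$ records the sign of $\pi_i$. By the definitions of $\Pi_3$ and $\Pi'_3$, an index $i\in[n-1]$ lies in $\Pi_3$ exactly when $s_i={+}$ and $s_{i+1}={-}$, and lies in $\Pi'_3$ exactly when $s_i={-}$ and $s_{i+1}={+}$ (in the first case $i$ is automatically a descent, in the second an ascent). Thus $\Pi_3$ and $\Pi'_3$ depend only on the pattern of sign changes of $s(\pi)$, and $m=\Neg(\pi)$ is just the number of $-$'s in $s(\pi)$.

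First I would decompose the set of negative positions $\{i\in[n]:\pi_i<0\}$ into its maximal runs of consecutive integers $[a_1,c_1],\dots,[a_b,c_b]$ with $a_1\le c_1<a_2\le c_2<\cdots<a_b\le c_b$. When $b=0$ (no negative entry) we have $m=0$ and $\Pi_3,\Pi'_3$ are empty, so the claim is immediate; otherwise $m=\sum_{j=1}^b(c_j-a_j+1)$. The key observation is that the run $[a_j,c_j]$ puts its left edge $a_j-1$ into $\Pi_3$ precisely when $a_j>1$ (the run is not a prefix of $s(\pi)$), puts its right edge $c_j$ into $\Pi'_3$ precisely when $c_j<n$ (the run is not a suffix), and that these exhaust $\Pi_3$ and $\Pi'_3$. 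Hence
\begin{equation*}
\sum_{i\in\Pi_3}i=\sum_{j\,:\,a_j>1}(a_j-1),\qquad\sum_{i\in\Pi'_3}i=\sum_{j\,:\,c_j<n}c_j .
\end{equation*}

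Next I would split according to the signs of $\pi_1$ and $\pi_n$: the sign of $\pi_1$ decides whether $a_1=1$, the sign of $\pi_n$ decides whether $c_b=n$. Plugging the two displayed formulas together with $m=\sum_{j=1}^b c_j-\sum_{j=1}^b a_j+b$ into $\sum_{i\in\Pi_3}i+m$, the $a_j$-terms cancel and one obtains in every case $\sum_{i\in\Pi_3}i+m=\sum_{j=1}^b c_j$ (this uses $a_1=1$ when $\pi_1<0$, and holds automatically when $\pi_1>0$). On the other hand $\sum_{i\in\Pi'_3}i=\sum_{j=1}^b c_j$ when $\pi_n>0$, whereas $\sum_{i\in\Pi'_3}i=\sum_{j=1}^b c_j-n$ when $\pi_n<0$ (here one uses $c_b=n$). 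Comparing the two expressions gives exactly (b) when $\pi_n>0$ and (a) when $\pi_n<0$; equivalently, both parts are the single identity $\sum_{i\in\Pi_3}i+m=\sum_{i\in\Pi'_3}i+n$ if $\pi_n<0$ and $\sum_{i\in\Pi_3}i+m=\sum_{i\in\Pi'_3}i$ if $\pi_n>0$.

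There is no deep obstacle; the only real work is keeping the four boundary cases (and the degenerate $b=0$) straight. If one prefers to avoid the case split, an alternative is to set $\chi_i=1$ if $\pi_i<0$ and $\chi_i=0$ otherwise, with $\chi_0=0$, and then verify directly that $\sum_{i\in\Pi_3}(i+1)-\sum_{i\in\Pi'_3}(i+1)=\sum_{i=2}^{n}i\,(\chi_i-\chi_{i-1})$, evaluate the right-hand side by Abel summation to $(n+1)\chi_n-m-\chi_1$, and note that $|\Pi_3|-|\Pi'_3|=\chi_n-\chi_1$ (the sign string has one more ${+}\to{-}$ change than ${-}\to{+}$ change if and only if it ends, but does not begin, with $-$). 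Combining these three facts and rearranging yields $\sum_{i\in\Pi_3}i+m=\sum_{i\in\Pi'_3}i+n\chi_n$ in one stroke, and since $\chi_n=1$ iff $\pi_n<0$, this is precisely (a) together with (b).
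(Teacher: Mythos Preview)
Your argument is correct and follows essentially the same idea as the paper's proof: both reduce the identity to the block structure of the sign string $s(\pi)\in\{+,-\}^n$, using that $\Pi_3$ and $\Pi'_3$ record precisely the ${+}{-}$ and ${-}{+}$ transitions. The only cosmetic difference is that the paper partitions $[n]$ into maximal runs of \emph{positive} positions and observes that the lengths of these runs (which total $n-m$) are $i_1$ and the differences $i_{k+1}-j_k$ (or $i_k-j_k$), whereas you partition into maximal runs of \emph{negative} positions $[a_j,c_j]$ and show both sides equal $\sum_j c_j$ (minus $n$ when $c_b=n$); these are dual versions of the same telescoping count. Your Abel-summation alternative, yielding $\sum_{i\in\Pi_3}i+m=\sum_{i\in\Pi'_3}i+n\chi_n$ in one line, is a neat case-free packaging that the paper does not offer.
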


\begin{proof}
Let $\pi \in \B_n$ with $\Pi_3=\{i_1, i_2, \dots, i_{\ell}\}$ and $\Pi'_3=\{j_1, j_2, \dots, j_{r}\}$ for some integers $\ell,r \geq 1$.
As the proof of (b) is similar,  we only prove (a) by considering two cases.
\begin{enumerate}[label=\roman*)]
\item [\rm (i)]  $\pi_{1} > 0$ and $\pi_{n} < 0$, we have $\ell=r+1$.
       It is easy to see that $i_{k+1}-j_{k}$ is the number of positive elements between the $k$th ascent position and the $(k+1)$th descent position from left to right.
       Note that $|\Pi_3|=|\Pi'_3|+1$ in this case.
       Therefore, we have $i_1 + \sum_{k=1}^{r}(i_{k+1}-j_{k}) = n-m$.
       
\item [\rm (ii)] $\pi_{1} < 0$ and $\pi_{n} < 0$, we have $\ell=r$.
       Similarly, $i_{k}-j_{k}$ is the number of positive elements between the $k$th ascent position and the $k$th descent position from left to right.
       Then we have $\sum_{k=1}^{r}(i_{k}-j_{k}) = n-m$.
\end{enumerate}
Combining the above two cases completes the proof of (a).
\end{proof}

The following \emph{$q$-symmetry} of $B_{n,k}(q)$ is crucial for our 
combinatorial proof of identity~\eqref{eq+eulerian+stir+B+q+trans}.
\begin{prop}\label{prop+iden+B+fmaj}
For each fixed nonnegative integer $n$ and the polynomial $B_{n,k}(q)$ defined in~\eqref{def+q+eulerian+poly+B}, we have
\begin{equation}\label{q-symmetry}
  B_{n,k}(q) = q^{2nk-n^2}B_{n,n-k}(q)
\end{equation}
for $0 \leq k \leq n$.
\end{prop}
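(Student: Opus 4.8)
The plan is to extract the proposition from a sharper, permutation-by-permutation statement about the bijection $\psi\colon\pi\mapsto\widetilde\pi$ of Lemma~\ref{prop+bijection+B}. Namely, I will show that
\begin{equation}\label{eq+fmaj+transform}
  \fmaj(\widetilde\pi)=\fmaj(\pi)+n^{2}-2n\,\des_B(\pi)\qquad\text{for every }\pi\in\B_n.
\end{equation}
Once \eqref{eq+fmaj+transform} is in hand, the proposition follows immediately: by Lemma~\ref{prop+bijection+B} the map $\psi$ restricts to a bijection from $\{\pi\in\B_n:\des_B(\pi)=k\}$ onto $\{\sigma\in\B_n:\des_B(\sigma)=n-k\}$, so
\begin{equation*}
  B_{n,n-k}(q)=\sum_{\des_B(\pi)=k}q^{\fmaj(\widetilde\pi)}
             =\sum_{\des_B(\pi)=k}q^{\fmaj(\pi)+n^{2}-2nk}
             =q^{\,n^{2}-2nk}B_{n,k}(q),
\end{equation*}
which is equivalent to \eqref{q-symmetry}.

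To prove \eqref{eq+fmaj+transform} I split $\fmaj$ into its two parts. Since the definition of $\widetilde\pi$ replaces each entry $\pi_j>0$ by $\pi_j-n-1<0$ and each $\pi_j<0$ by $\pi_j+n+1>0$, the map $\psi$ reverses all signs, whence $\Neg(\widetilde\pi)=n-\Neg(\pi)$. For the main term $\sum_{i\in\Des_B}2i$, note first that a descent at position $0$ contributes $0$ and may be ignored. Writing $\Des_B(\pi)\cap[n-1]=\Pi_1\cup\Pi_2\cup\Pi_3$ and $[n-1]\setminus\Des_B(\pi)=\Pi_1'\cup\Pi_2'\cup\Pi_3'$ as before Lemma~\ref{prop+bijection+B}, the sign-type transfer rules recalled there give $\Des_B(\widetilde\pi)\cap[n-1]=\{\,n-i:i\in\Pi_1'\cup\Pi_2'\cup\Pi_3\,\}$ (same-sign descents and $-+$ ascents of $\pi$ turn into ascents, while same-sign ascents and $+-$ descents turn into descents). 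Replacing $\Pi_1'\cup\Pi_2'$ by its complement $[n-1]\setminus(\Pi_1\cup\Pi_2\cup\Pi_3\cup\Pi_3')$ and using $\sum_{i=1}^{n-1}i=\binom n2$, one obtains a closed formula for $\sum_{j\in\Des_B(\widetilde\pi)}j$ involving only $n$, $|\Pi_1\cup\Pi_2|$, $|\Pi_3|$, and the partial sums $\sum_{\Pi_1\cup\Pi_2}i$, $\sum_{\Pi_3}i$, $\sum_{\Pi_3'}i$.

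Subtracting, the difference $\fmaj(\widetilde\pi)-\fmaj(\pi)$ reduces to an expression containing $\sum_{i\in\Pi_3'}i-\sum_{i\in\Pi_3}i$ together with $\Neg(\pi)$; these are exactly the quantities controlled by Lemma~\ref{prop+iden+B+fmaj+PN}, which evaluates $\sum_{\Pi_3}i-\sum_{\Pi_3'}i$ in terms of $\Neg(\pi)$ and $n$ according to the sign of $\pi_n$. Substituting, and using $|\Pi_1|+|\Pi_2|+|\Pi_3|+|\Pi_1'|+|\Pi_2'|+|\Pi_3'|=n-1$ together with the relation between $|\Pi_3|$ and $|\Pi_3'|$ dictated by the signs of $\pi_1$ and $\pi_n$ (the four cases (i)--(iv) in the proof of Lemma~\ref{prop+bijection+B}), every term cancels except $n^{2}-2n\,\des_B(\pi)$. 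The only real difficulty is the bookkeeping here: the efficient route is to carry the computation through once keeping $\sum_{\Pi_3}i-\sum_{\Pi_3'}i$ and $|\Pi_3|-|\Pi_3'|$ symbolic, and only at the last step invoke the two alternatives of Lemma~\ref{prop+iden+B+fmaj+PN} and the four sign-cases to verify that the leftover vanishes, rather than running all four cases independently from the start.
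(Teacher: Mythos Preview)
Your proposal is correct and follows essentially the same route as the paper: both argue via the bijection $\psi$ of Lemma~\ref{prop+bijection+B}, establish the pointwise identity $\fmaj(\widetilde\pi)=\fmaj(\pi)+n^{2}-2n\,\des_B(\pi)$ by decomposing $\Des_B(\widetilde\pi)\cap[n-1]$ as $\{n-i:i\in\Pi_1'\cup\Pi_2'\cup\Pi_3\}$, and then invoke Lemma~\ref{prop+iden+B+fmaj+PN} together with the four sign-cases of Lemma~\ref{prop+bijection+B} to finish. The only cosmetic difference is that the paper writes out case~(i) in full and omits the other three, whereas you keep $\sum_{\Pi_3}i-\sum_{\Pi_3'}i$ and $|\Pi_3|-|\Pi_3'|$ symbolic until the final substitution.
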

\begin{proof}
For any $\pi \in \B_n$ with $k$ descents and $m$ negative elements,
then $\des_B(\psi(\pi))=n-k$ by Lemma~\ref{prop+bijection+B}.
Hence, it suffices to show that
\begin{equation*}
  \fmaj(\pi) = 2nk-n^2+\fmaj(\psi(\pi)).
\end{equation*}
Let $\widetilde{\pi}=\psi(\pi)$, we consider the proof in terms of the signs of $\pi_1$ and $\pi_n$.
We only give the proof for this case $\pi_{1} > 0$ and $\pi_{n} > 0$ and omit similar discussions for
other three cases for the brevity.

If $\pi_{1} > 0$ and $\pi_{n} > 0$, by the definition of the mapping $\psi$,
then the set of descents in $\widetilde{\pi}$ is the disjoint union
$$
\{0\}\cup (n-\Pi'_1)\cup (n-\Pi'_2)\cup (n-\Pi_3)
$$ 
and $\widetilde{\pi}$ has $n-m$ negative elements.
Then,
\begin{equation}\label{iden+fmaj+map+pi}
    \fmaj(\widetilde{\pi})
  = 2\left(\sum_{i \in n-\Pi'_1} i + \sum_{i \in n-\Pi'_2} i+\sum_{i \in n-\Pi_3} i\right) + n-m.
\end{equation}
By Case (i) in the proof of Proposition~\ref{prop+bijection+B}, we have $|\Pi'_1|+|\Pi'_2|+|\Pi_3|=n-k-1$.
Hence identity~\eqref{iden+fmaj+map+pi} is equivalent to
\begin{align*}
      \fmaj(\widetilde{\pi})
  &= 2n(n-k-1)- 2\left(\sum_{i \in \Pi'_1} i + \sum_{i \in \Pi'_2} i+\sum_{i \in \Pi_3} i\right) + n-m              \\
  &= 2n(n-k-1)- 2\left(\binom{n}{2}-\sum_{i \in \Pi_1} i - \sum_{i \in \Pi_2} i-\sum_{i \in \Pi'_3} i\right) + n-m \\
  &= n^2-2nk+2\sum_{i \in \Pi_1} i + 2\sum_{i \in \Pi_2} i+2\sum_{i \in \Pi'_3} i - m,
\end{align*}
where the second equality uses the fact that the sum of all descent and ascent indexes is $\binom{n}{2}$.
By statement (b) of Proposition~\ref{prop+iden+B+fmaj+PN}, the above identity equals
\begin{align*}
      \fmaj(\widetilde{\pi})
  &= n^2-2nk+2\sum_{i \in \Pi_1} i +  2\sum_{i \in \Pi_2} i+2\sum_{i \in \Pi_3} i + m \\
  &= n^2-2nk+\fmaj(\pi).
\end{align*}
This is the desired result.
\end{proof}

\subsection{Ordered set partitions of type B}\label{subsection+B+order+partition}
Recall that $\langle n\rangle=\{\overline{n},\ldots,\overline{1},0,1,\ldots,n\}$.
There are at least two  equivalent definitions of type B set partition.  We say that a  set partition of  $\langle n\rangle$
is a \emph{type B partition} if it satisfies the following properties
\begin{itemize}
\item [\rm (1)] there exactly is one zero block $T$ such that $0\in T$ and $-T=T$;
\item [\rm (2)] if $T$ appears as a block then $-T$ is also a block.
\end{itemize}

It is known \cite{BBG19, SS22} that  $S_B(n,k)$ is the number of type B partitions of $\langle n\rangle$ with $2k+1$ blocks. An \emph{ordered signed partition} of $\langle n\rangle$
is a sequence $(T_0,T_1,T_2,\ldots,T_{2k})$ of disjoint subsets (blocks) $T_i$ of $\langle n\rangle$
satisfying
\begin{itemize}
  \item [\rm (1)] $0 \in T_0$ and $T_0=\overline{T}_0$, and
  \item [\rm (2)]  $T_{2i}=\overline{T}_{2i-1}$ for $i\in [k]$,
\end{itemize}
where $\overline{T}=\{\overline{t}: t \in T\}$.
The blocks $T_{2i}$ and $T_{2i-1}$ are called \emph{paired}.
Clearly the number of all ordered signed partitions of $\langle n\rangle$ with $2k+1$ blocks is $2^kk!S_B(n,k)$.

For our purpose, it is convenient to use the following equivalent definition of ordered signed partition.
An \emph{ordered set partition with sign} of $S=\{0,1,\ldots,n\}$ is a sequence $(S_0, S_1, \ldots, S_k)$ such that
\begin{enumerate}
\item  [\rm (1)] $S_0=\{t\in T_0: t\leq 0\}$, and
\item  [\rm (2)] $S_i=T_{2i-1}$ for $i\in [k]$.
\end{enumerate}
For example, the sequence$(\{0, \overline{3}, \overline{1}, \overline{4}\}, \{\overline{2}, 7\}, \{\overline{6}\},\{8, \overline{5}\})$
is an ordered set partition with sign of $\{0,1,\ldots,8\}$.

On the other hand, as in \cite{RW15},
we can consider an ordered set partition with sign as a descent-starred signed permutation,
i.e, for any $\pi \in \B_n$, the space following element $\pi_i$,
satisfying $\pi_i > \pi_{i+1}$ for some $0 \leq i \leq n-1$, is starred or unstarred.
That is to say, instead of using brackets to signify separations between blocks,
the spaces between elements sharing a block can be marked with stars and all blocks are written in decreasing order.
Note that we require that the block including element $0$ always stands first on the list.

For example, the ordered set partition with sign $(\{0,\overline{3},\overline{1},\overline{4}\},\{\overline{2},7\},\{\overline{6}\},\{8,\overline{5}\})$
can be written as $0_{*}\overline{1}_{*}\overline{3}_{*}\overline{4}~7_{*}\overline{2}~\overline{6}~8_{*}\overline{5}$.
The above discussion shows that there is a bijection between all ordered set partitions with sign of the set $\{0,1,\ldots,n\}$
and all descent-starred signed permutations in $\B_n$.
For $0\leq k\leq n$ define the set
\begin{equation}
  \B^{>}_{n,k}:=\{(\pi,S): \pi \in \B_n, S \subseteq \Des_B(\pi), |S|=k \},
\end{equation}
where $S$ is the set of the starred descent positions.

For $(\pi,S) \in \B^{>}_{n,k}$ define the statistic
\begin{equation*}
  \fmaj((\pi, S)):=\fmaj(\pi)-\sum_{j \in S}(2|\Des_B(\pi)\cap\{j,\ldots,n-1\}|-1)
\end{equation*}
and the polynomial
\begin{equation}\label{def+fmaj+B}
  B^{\fmaj}_{n,k}(q):=\sum_{(\pi,S) \in \B^{>}_{n,k}}q^{\fmaj((\pi,S))}.
\end{equation}
By the definition of the statistic $\fmaj((\pi,S))$,
we attach the $i$th descent position of $\pi$ (from right to left) with the weight $1$
if this descent position is unstarred and the weight $z/q^{2i-1}$ if this descent position is starred.
Therefore, the following identity holds
\begin{equation}\label{ide+fmaj+star}
     \sum_{k=0}^{n}B^{\fmaj}_{n,k}(q)z^k
   = \sum_{\pi \in {\B_n}}q^{\fmaj(\pi)}\prod\limits_{i=1}^{\des_B(\pi)}\left(1+\frac{z}{q^{2i-1}} \right).
\end{equation}

For convenience, we  recall two known $q$-identities (see \cite[Theorem 3.3]{An76})
\begin{align}
    \prod_{i=1}^N(1-zq^{i-1})
 &= \sum_{j=0}^N {N\brack j}_q(-1)^j z^j q^{j(j-1)/2}; \label{q1}\\
    \frac{1}{\prod_{i=1}^N(1-zq^{i-1})}
 &= \sum_{j=0}^{\infty} {N+j-1\brack j}_q z^j.         \label{q2}
\end{align}

We first establish  the following result for polynomials  $B_{n,n-\ell}(q)$ and $B^{\fmaj}_{n,n-k}(q)$
 defined by~\eqref{def+q+eulerian+poly+B} and~\eqref{def+fmaj+B}.
\begin{prop}\label{prop+fmaj+Euler+B}
For $0 \leq k \leq n$ we have
\begin{equation*}
    B^{\fmaj}_{n,n-k}(q)
  = \sum_{\ell=0}^{k}q^{(n-k)(2\ell-n-k)}B_{n,n-\ell}(q)
    {n-\ell \brack k-\ell}_{q^2}.
\end{equation*}
\end{prop}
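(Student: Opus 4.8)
The plan is to extract the coefficient of $z^k$ on both sides of the generating-function identity~\eqref{ide+fmaj+star}, after rewriting the product in a form amenable to the $q$-binomial theorem. Starting from
\[
\sum_{k=0}^{n}B^{\fmaj}_{n,k}(q)z^k = \sum_{\pi \in \B_n}q^{\fmaj(\pi)}\prod_{i=1}^{\des_B(\pi)}\Bigl(1+\frac{z}{q^{2i-1}}\Bigr),
\]
I would first substitute $z \mapsto z q^{?}$ or factor out powers of $q$ so that the product over $i$ becomes $\prod_{i=1}^{d}(1 - w q^{2(i-1)})$ with $w$ a monomial in $z$ and $q$ (here $d=\des_B(\pi)$); concretely, $1+z/q^{2i-1} = 1 - (-z/q)\cdot q^{-2(i-1)}$, so after also replacing $q^2$ by $q^{-2}$ in~\eqref{q1} one gets a finite expansion $\prod_{i=1}^{d}(1+z/q^{2i-1}) = \sum_{j=0}^{d}{d\brack j}_{q^2} z^j q^{-j^2}$ (the exponent $-j^2$ coming from $\binom{j}{2}\cdot(-2) - j$, to be checked carefully). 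Reading off the coefficient of $z^k$ on the left gives
\[
B^{\fmaj}_{n,k}(q) = \sum_{\pi\in\B_n}q^{\fmaj(\pi)}{\des_B(\pi)\brack k}_{q^2}q^{-k^2},
\]
and grouping $\pi$ by $\des_B(\pi)=m$ yields $B^{\fmaj}_{n,k}(q) = q^{-k^2}\sum_{m}{m\brack k}_{q^2}B_{n,m}(q)$.

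Next I would apply this with $k$ replaced by $n-k$ and re-index by $m = n-\ell$, so that
\[
B^{\fmaj}_{n,n-k}(q) = q^{-(n-k)^2}\sum_{\ell=0}^{k}{n-\ell\brack n-k}_{q^2}B_{n,n-\ell}(q),
\]
using ${n-\ell\brack n-k}_{q^2} = {n-\ell\brack k-\ell}_{q^2}$ and noting the sum is empty unless $\ell\le k$. The remaining task is purely a power-of-$q$ bookkeeping exercise: the target exponent is $(n-k)(2\ell-n-k)$, and one must verify that the naive exponent $-(n-k)^2$ produced above, together with whatever $q$-powers were absorbed when normalizing the product into the shape of~\eqref{q1}, collapses to exactly $(n-k)(2\ell-n-k)$. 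I expect the discrepancy to be handled by pulling a factor out of ${\des_B(\pi)\brack k}_{q^2}$ or by a shift $z\mapsto z q^{c}$ chosen at the outset so that the $\ell$-dependent exponent $2\ell(n-k)$ appears naturally; the clean way is to not normalize prematurely but instead expand $\prod_{i=1}^{d}(1+z/q^{2i-1})$ directly via~\eqref{q2}-style reasoning applied to a truncated product, tracking the exponent as a function of both $j$ and $d$.

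The main obstacle is precisely this exponent arithmetic: one has three sources of $q$-powers — the $\fmaj(\pi)$ weight, the factors $q^{-(2i-1)}$ inside the product, and the $q$-binomial coefficient ${d\brack j}_{q^2}$ — and one must show their combination, after the change of summation variable $m\mapsto\ell$, telescopes to $q^{(n-k)(2\ell-n-k)}$. The cleanest route is to carry out the expansion of the product \emph{with a formal variable}, identify the coefficient of $z^k$ as $q^{-\binom{k}{2}\cdot 2 - k\cdot(\text{something})}{d\brack k}_{q^2}$ (constant in $\ell$) times $q^{\fmaj(\pi)}$, and then let the $\ell$-dependence enter only through $\fmaj$ being constant on each class $\des_B(\pi)=n-\ell$ while the binomial splits as ${n-\ell\brack k-\ell}_{q^2}$; no appeal to Proposition~\ref{prop+bijection+B} or the $q$-symmetry~\eqref{q-symmetry} is needed for this particular proposition, since those are used later to convert $B^{\fmaj}_{n,n-k}(q)$ and $B_{n,n-\ell}(q)$ back into $S_B[n,k]$ and $B_{n,\ell}(q)$ in the proof of Theorem~\ref{thm+stirling+frobenius+B}. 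I would double-check the small cases $n=k$ and $k=0$ to pin down the sign and normalization before writing the general computation.
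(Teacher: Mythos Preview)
Your approach is correct and essentially identical to the paper's: both start from identity~\eqref{ide+fmaj+star}, group permutations by $\des_B(\pi)$, expand the product $\prod_{i=1}^{d}(1+z/q^{2i-1})$ via the $q$-binomial theorem~\eqref{q1}, extract the coefficient of $z^{n-k}$, and use the symmetry ${n-\ell\brack n-k}_{q^2}={n-\ell\brack k-\ell}_{q^2}$. The one slip is your intermediate formula $B^{\fmaj}_{n,k}(q)=q^{-k^2}\sum_m{m\brack k}_{q^2}B_{n,m}(q)$: the exponent $-k^2$ is what you get with ${m\brack k}_{q^{-2}}$, and converting to ${m\brack k}_{q^2}$ costs an extra $q^{-2k(m-k)}$, giving the $m$-dependent exponent $q^{k(k-2m)}$---exactly the ``discrepancy'' you anticipate, and exactly what the paper computes in one line as $[z^{n-k}]\prod_{i=1}^{n-\ell}(1+zq/q^{2i})=q^{(n-k)(2\ell-n-k)}{n-\ell\brack n-k}_{q^2}$.
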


\begin{proof}
Let $(\pi, S) \in \B^{>}_{n,n-k}$, then there are $n-k$ starred descents in $(\pi, S)$,
this  means that the number of ascents is in $\{0\} \cup [k]$.
Suppose that the signed permutation $\pi$ has $\ell$ ascents, where $\ell \in \{0\} \cup [k]$,
then the signed permutation $\pi$ can be any permutation in $\B_n$ with $n-\ell$ descents.
Therefore, the sum of $q$-counting
about the flag-major statistic for all possible signed permutations with $n-\ell$ descents is the polynomial $B_{n,n-\ell}(q)$.

In addition, for a signed permutation $\pi$ with $n-\ell$ descents,
we can choose $n-k$ descents from $n-\ell$ descents in $\pi$ and mark them with stars.
By the definition of the statistic $\fmaj((\pi,S))$ and identities~\eqref{ide+fmaj+star} and \eqref{q1},
we have
\begin{align*}
    [z^{n-k}]\prod
    \limits_{i=1}^{n-\ell}
    \left(1+\frac{zq}{q^{2i}}\right)
  = q^{(n-k)(2\ell-n-k)}{n-\ell \brack n-k}_{q^2},
\end{align*}
where $[z^{k}]f(z)$ denotes the coefficient of $z^{k}$ in the polynomial $f(z)$.
Using the symmetry of $q$-binomial coefficients
\begin{equation*}
  {n-\ell \brack n-k}_{q^2}={n-\ell \brack k-\ell}_{q^2},
\end{equation*}
we complete the proof.
\end{proof}

To derive  a  recurrence relation for  the polynomials $B^{\fmaj}_{n,n-k}(q)$,
we introduce some notations.
For other unstarred positions, we label the rightmost position in our descent-starred signed permutation with $0$,
and then label its unlabelled descent positions from right to left with 1,2,\ldots.
Next, all other unlabelled positions from left to right are labelled with increasing labels starting from the next number.
We call the above labelling as \emph{fmaj-labelling}.
For example, if $(\pi, S) = 4_{*}3_{*}\overline{1}~7_{*}\overline{2}~\overline{6}~8_{*}\overline{5}$,
then the fmaj-labelling for $(\pi, S)$~is
\begin{equation*}
	_{2} 4_{*}3_{*}\overline{1}_{3} 7_{*}\overline{2}_{1} \overline{6}_{4} 8_{*}\overline{5}_{0}.
\end{equation*}

For $\alpha=n$ or $\overline{n}$, we define the mapping
\begin{equation}\label{def:phi1}
  \phi^{|}_{\alpha,k}:\{0,1,\ldots,n-k-1\}\times \mathcal{B}^{>}_{n-1,k} \rightarrow \mathcal{B}^{>}_{n,k}
\end{equation}
by sending $(i,(\pi,S))$ to the descent-starred signed permutation obtained from $(\pi,S)$ by
\begin{itemize}
  \item [\rm (1)] inserting $\alpha$ at the fmaj-labelling $i$, and then
  \item [\rm (2)] moving each star on the right of $\alpha$ one descent to its left.
\end{itemize}
Clearly, the rightmost descent will be unstarred when the letter $n$ is not inserted after $\pi_{n-1}$.
Thus, we have the following relation between these labels and insertion mappings.

\begin{lem}\label{lem+fmaj+bar}
For $0 \leq k \leq n-1$ we have
\begin{itemize}
\item [\rm (a)] if $(\pi, S) \in \mathcal{B}^{>}_{n-1, k}$, then $\fmaj(\phi^{|}_{n,k}(i,(\pi,S)))=\fmaj((\pi,S))+2i$
                for $i \in \{0\} \cup [n-k-1]$;
\item [\rm (b)] if $(\pi, S) \in \mathcal{B}^{>}_{n-1, k}$, then $\fmaj(\phi^{|}_{\overline{n},k}(i,(\pi,S)))=\fmaj((\pi,S))+2i-1$
                for $i \in [n-k-1]$;
\item [\rm (c)] if $(\pi, S) \in \mathcal{B}^{>}_{n-1, k}$, then $\fmaj(\phi^{|}_{\overline{n},k}(0,(\pi,S)))=\fmaj((\pi,S))+2n-2k-1$.
\end{itemize}
\end{lem}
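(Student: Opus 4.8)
The plan is to track precisely how the flag-major statistic of a descent-starred signed permutation changes under the insertion map $\phi^{|}_{\alpha,k}$, using the combinatorial meaning of the fmaj-labelling. Recall that for $(\pi,S)\in\mathcal B^{>}_{n,k}$ the quantity $\fmaj((\pi,S))$ equals $\fmaj(\pi)$ minus $\sum_{j\in S}(2|\Des_B(\pi)\cap\{j,\dots,n-1\}|-1)$, so that (as noted after \eqref{def+fmaj+B}) the $i$th descent from the right contributes a factor $1$ if unstarred and $z/q^{2i-1}$ if starred. Equivalently, each starred descent that is the $i$th descent from the right lowers the naive flag-major weight by $2i-1$. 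So the task reduces to: after inserting $\alpha\in\{n,\overline n\}$ at fmaj-label $i$ and shifting stars left by one descent, compute the net change in $\fmaj(\pi)$ (the un-starred flag-major) and in the total "star discount" $\sum_{j\in S}(2(\text{rank from right})-1)$.

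First I would record the effect of inserting $\alpha$ on the plain flag-major index $\fmaj(\pi')$, where $\pi'$ is the underlying signed permutation (ignoring stars). Because $|\alpha|=n$ is the largest letter in absolute value, inserting $n$ creates a descent exactly at the position where it lands unless it is placed last, and inserting $\overline n$ creates a descent to its right always while behaving as a negative entry; moreover inserting $\overline n$ contributes $1$ to $\Neg$. The fmaj-labelling is designed precisely so that inserting $\alpha$ at label $i$ changes $\sum_{j\in\Des_B}2j$ by $2i$ in the $\alpha=n$ case (the unstarred descent labels $0,1,2,\dots$ from the right count, via the standard "insert the largest letter" argument, exactly the amount $2i$ added to the sum of descent positions), while the extra $+\,\Neg$ term contributes the additional $-1$ in the $\overline n$ case. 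This is the step I expect to be the main obstacle: one must check that the somewhat intricate labelling rule — first the rightmost position gets $0$, then the remaining descent positions from right to left, then the non-descent positions from left to right — really does realize $\fmaj$ additively, across all the sub-cases (inserting into a descent slot vs. a non-descent slot, inserting at the far right, inserting at the far left). It is essentially the type B analogue of the classical fact that MacMahon's major index is "additive" under insertion of the maximal letter, and I would verify it by the standard bookkeeping: when $\alpha$ is inserted at a slot, every descent position strictly to its right is shifted up by one, and the count of those shifted positions, together with whether a new descent is created at the insertion point, is exactly captured by the label $i$.

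Next I would handle the star discount. By construction of $\phi^{|}_{\alpha,k}$, every star lying to the right of $\alpha$ is moved one descent to its left; the set of starred descents still has size $k$, and the rank-from-the-right of each star is unchanged except possibly for the interaction with the newly created descent at (or just right of) $\alpha$. In the $\alpha=n$ case with $i\ge 1$, $n$ is inserted into a descent slot, no star sits on the rightmost descent, and the discounts rearrange so that the total $\sum_{j\in S}(2(\cdot)-1)$ is unchanged; this gives part (a). For $\alpha=\overline n$, inserting $\overline n$ always produces a descent to its right, and one checks the stars shift so that the total discount increases by exactly $1$ when $i\ge 1$ — yielding the $+2i-1$ of part (b) — whereas the boundary case $i=0$, where $\overline n$ goes to the far right, puts the new descent at position $n-1$ as the rightmost descent, contributing $2(n-1)$ via $\fmaj(\pi')$ plus $1$ from $\Neg$, and the star bookkeeping produces the stated $+2n-2k-1$ of part (c). I would organize the whole proof as a short case analysis on $\alpha\in\{n,\overline n\}$ and on whether $i=0$, invoking the insertion picture of Remmel–Wilson \cite{RW15} adapted to type B, and in each case simply add the change in $\fmaj(\pi')$ to the negative of the change in the star discount. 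The only genuinely delicate point, worth spelling out carefully, is the claim that the fmaj-labelling converts "insert $\alpha$ at label $i$" into "$+2i$" (resp.\ "$+2i-1$") for the plain statistic; once that is established, parts (a), (b), (c) follow by inspection.
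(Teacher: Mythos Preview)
Your overall strategy---split the change in $\fmaj((\pi,S))$ into the change in the plain $\fmaj(\pi)$ and the change in the star discount $\sum_{j\in S}(2|\Des_B(\pi)\cap\{j,\dots\}|-1)$, then do a case analysis on $\alpha$ and on the label $i$---is exactly the paper's approach. But the specific intermediate claims you make about each piece are wrong, and the argument as written does not go through.

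First, it is not true that inserting $n$ at fmaj-label $i$ changes the plain $\fmaj(\pi)$ by exactly $2i$. If the insertion slot follows $\pi_p$ with $\pi_p>\pi_{p+1}$ and there are $c$ unstarred and $d$ starred descents to the right of $\pi_{p+1}$, then $i=c+1$ but $\fmaj$ increases by $2c+2d+2=2i+2d$; if instead $\pi_p<\pi_{p+1}$ with $a$ starred descents to the left of $\pi_p$, then $i=p+1-a+c$ but $\fmaj$ increases by $2p+2+2c+2d=2i+2a+2d$. Second, and correspondingly, the star discount is \emph{not} unchanged: it increases by $2d$ in the first case and by $2a+2d$ in the second (stars to the right get shifted, and in the ascent case the new descent bumps the rank of every star to the left). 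These two errors cancel, which is why the final answer $+2i$ is right, but your justification for each piece is incorrect. Third, your assertion that ``in the $\alpha=n$ case with $i\ge 1$, $n$ is inserted into a descent slot'' is false: only the labels $1,\dots,b+c$ (the unstarred descents) are descent slots, while the higher labels are ascent (or leftmost) slots, and both sub-cases must be handled. Finally, for part (b) the extra $-1$ relative to part (a) comes entirely from the plain $\fmaj$ side (the new descent contributes $2p+1$ rather than $2p+2$, together with the $+1$ from $\Neg$), not from the star discount, which behaves exactly as in (a).

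In short: the skeleton is right, but you need to actually carry out the two-case bookkeeping (descent slot vs.\ ascent slot) and track the $a,c,d$ parameters, rather than asserting that the labelling makes the plain $\fmaj$ jump by $2i$ on its own.
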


\begin{proof}
We will discuss the change of the statistic $\fmaj((\pi, S))$ in terms of the insertion position of $n$ or $\overline{n}$.
Suppose that the space labelled $i$ under the fmaj-labelling of $(\pi, S)$ is the space immediately following $\pi_p$.
Moreover, we suppose that there are a starred descents and b unstarred descents to the left of $\pi_p$
and $c$ unstarred descents and d starred descents to the right of $\pi_{p+1}$ in $(\pi, S)$.

For (a), inserting $n$ into the space labelled $i$. Let
$(\tau, T)=\phi^{|}_{n,k}(i,(\pi,S))$.
If $i=0$, that is to say we insert $n$ at the end, then the insertion of
$n$ does not affect $\fmaj((\pi, S))$,
thus $\fmaj((\tau, T))=\fmaj((\pi, S))$.
For $i \neq 0$, there will exist two cases in terms of the values of $\pi_p$ and $\pi_{p+1}$.

Case (i):
If $\pi_p > \pi_{p+1}$, then $i=c+1$. By inserting $n$ after $\pi_p$,
which preserves each descent position before $\pi_p$ and increases each descent position after $\pi_{p}$ by one.
Thus, the statistic $\fmaj(\tau)=\fmaj(\pi)+2c+2d+2$.
In addition, the insertion of $n$ does not affect the starred descents before $\pi_p$ to the corresponding sum
$\sum_{j \in S}(2|\Des_B(\pi)\cap\{j,\ldots,n-2\}|-1)$.
Moving each star after $\pi_{p+1}$ one descent to its left
that increases the sum $\sum_{j \in S}(2|\Des_B(\pi)\cap\{j,\ldots,n-2\}|-1)$ by two.
Therefore, we have
\begin{equation*}
  \sum_{j \in T}(2|\Des_B(\tau)\cap\{j,\ldots,n-1\}|-1)=\sum_{j \in S}(2|\Des_B(\pi)\cap\{j,\ldots,n-2\}|-1)+2d
\end{equation*}
since there are d stars after $\pi_{p+1}$. Hence,
\begin{align*}
     \fmaj((\tau, T))
  &= \fmaj(\tau)-\sum_{j \in T}(2|\Des_B(\tau)\cap\{j,\ldots,n-1\}|-1)                          \\
  &= \fmaj(\pi)+2c+2d+2-\sum_{j \in S}(2|\Des_B(\pi)\cap\{j,\ldots,n-2\}|-1)-2d    \\
  &= \fmaj((\pi, S))+2c+2                                                                     \\
  &= \fmaj((\pi, S))+2i
\end{align*}
for $i \in [n-k-1]$.

Case (ii):
If $\pi_p < \pi_{p+1}$, then $i=p+1-a+c$. By inserting $n$ after $\pi_p$,
which preserves each descent position before $\pi_p$ and increases each descent position after $\pi_{p}$ by one.
Besides, note that there is a new descent, $p+1 \in \Des_B(\tau)$ while inserting $n$ after $\pi_p$.
Thus, the statistic $\fmaj(\tau)=\fmaj(\pi)+2p+2+2c+2d$.
In addition, the insertion of $n$ increases each starred descent before $\pi_p$ to the corresponding sum
$\sum_{j \in S}(2|\Des_B(\pi)\cap\{j,\ldots,n-2\}|-1)$ by two.
Moving each star after $\pi_{p+1}$ one descent to its left
that increases the sum $\sum_{j \in S}(2|\Des_B(\pi)\cap\{j,\ldots,n-2\}|-1)$ by two.
Therefore,
\begin{equation*}
  \sum_{j \in T}(2|\Des_B(\tau)\cap\{j,\ldots,n-1\}|-1)=\sum_{j \in S}(2|\Des_B(\pi)\cap\{j,\ldots,n-2\}|-1)+2a+2d
\end{equation*}
since there are a stars before $\pi_p$ and d stars after $\pi_{p+1}$. Hence,

\begin{align*}
     \fmaj((\tau, T))
  &= \fmaj(\tau)-\sum_{j \in T}(2|\Des_B(\tau)\cap\{j,\ldots,n-1\}|-1)          \\
  &= \fmaj(\pi)+2p+2+2c+2d                                                      \\
  &\quad-\sum_{j \in S}(2|\Des_B(\pi)\cap\{j,\ldots,n-2\}|-1)-2a-2d             \\
  &= \fmaj((\pi, S))+2p+2-2a+2c                                                 \\
  &= \fmaj((\pi, S))+2i
\end{align*}
for $i \in [n-k-1]$.

For (b), inserting $\overline{n}$ into the space labelled $i$.
Let $(\mu, R)=\phi^{|}_{\overline{n},k}(i,(\pi,S))$.
For $i \neq 0$, all changes for $\fmaj((\mu, R))$ are the same to (a) except that
for the statistic $\fmaj(\pi)$ when the new descent position generated by $\overline{n}$.
In this case, there always exists one descent between $\pi_p$ and $\overline{n}$.
The descent generated by $\overline{n}$ increases the statistic
$\fmaj(\pi)$ by $1$ when $\pi_p > \pi_{p+1}$ and $2p+1$ when $\pi_p < \pi_{p+1}$, respectively.
For the insertion of $n$ at same position, the changes separately are $2$ and $2p+2$ for those two cases.
Following the discussion of (a), it is easy to know that
\begin{equation*}
    \fmaj((\mu, R))=\fmaj((\pi, S))+2i-1
\end{equation*}
for $i \in [n-k-1]$.

For (c), if $i=0$, inserting $\overline{n}$ after $\pi_{n-1}$,
then the only change is the new descent $\pi_{n-1} > \overline{n}$.
That is to say, the insertion of $\overline{n}$ increases $\fmaj(\pi)$
and $\sum_{j \in S}(2|\Des_B(\pi)\cap\{j,\ldots,n-2\}|-1)$ by $2n-1$ and $2k$, respectively.
Thus,
\begin{equation*}
    \fmaj((\mu, R))=\fmaj((\pi, S))+2n-2k-1.
\end{equation*}
Summarising the above cases we have completed the proof.
\end{proof}

As mentioned before, the mapping $\phi^{|}_{\alpha,k}$ preserves the number of stars in the mapping process.
Similarly, we need to define some mappings that increase the number of stars by one as follows:
\begin{equation}\label{def:phi*}
 \phi^{*}_{n,k}:\{1,2,\ldots,n-k\}\times \B^{>}_{n-1,k-1} \rightarrow \B^{>}_{n,k}
 \end{equation}
and
\begin{equation}
 \phi^{*}_{\overline{n},k}:\{0,1,\ldots,n-k\}\times \B^{>}_{n-1,k-1} \rightarrow \B^{>}_{n,k},
\end{equation}
which send $(i,(\pi,S))$ to the descent-starred signed permutation obtained from $(\pi,S)$ by
\begin{itemize}
  \item [\rm (1)] inserting $n$ (resp., $\overline{n}$) at the fmaj-labelling $i$, then
  \item [\rm (2)] moving each star on the right of $n$ (resp., $\overline{n}$) one descent to its left, and then
  \item [\rm (3)] placing a star at the rightmost descent of the resulting descent-starred signed permutation.
\end{itemize}

In analogy with the discussion in the proof of Lemma \ref{lem+fmaj+bar},
let $\alpha=n$ or $\overline{n}$ and $(\tau, T)=\phi^{*}_{\alpha,k}(i,(\pi,S))$.
The first step and second one from the mapping $\phi^{*}_{\alpha,k}$
have same effect with $\phi^{|}_{\alpha,k}$ to the statistics $\fmaj(\pi)$
and $\sum_{j \in S}(2|\Des_B(\pi)\cap\{j,\ldots,n-2\}|-1)$.
The last step from the mapping $\phi^{*}_{\alpha,k}$,
placing a star at the rightmost of resulting descent-starred signed permutation,
which increases the sum $\sum_{j \in S}(2|\Des_B(\pi)\cap\{j,\ldots,n-2\}|-1)$ by one.
Therefore, we have the following results, of which the proof is omitted for the brevity.

\begin{lem}\label{lem+fmaj+star}
For $1 \leq k \leq n$ we have
\begin{itemize}
\item [\rm (a)] if $(\pi, S) \in \B^{>}_{n-1, k-1}$, then $\fmaj(\phi^{*}_{n,k}(i,(\pi,S)))=\fmaj((\pi,S))+2i-1$
                for $i \in [n-k]$;
\item [\rm (b)] if $(\pi, S) \in \B^{>}_{n-1, k-1}$, then $\fmaj(\phi^{*}_{\overline{n},k}(i,(\pi,S)))=\fmaj((\pi,S))+2i-2$
                for $i \in [n-k]$;
\item [\rm (c)] if $(\pi, S) \in \B^{>}_{n-1, k-1}$, then $\fmaj(\phi^{*}_{\overline{n},k}(0,(\pi,S)))=\fmaj((\pi,S))+2n-2k$.
\end{itemize}
\end{lem}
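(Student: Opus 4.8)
The plan is to \emph{factor} each map $\phi^{*}_{\alpha,k}$ through the corresponding star‑preserving map and then read off the effect of the single extra star. Fix $(\pi,S)\in\B^{>}_{n-1,k-1}$. Steps (1) and (2) in the definition of $\phi^{*}_{\alpha,k}$ — insert $\alpha$ at the fmaj‑label $i$, then slide every star lying to the right of $\alpha$ one descent to its left — are exactly the two operations defining $\phi^{|}_{\alpha,k-1}(i,(\pi,S))$, and they are applied to the same input with respect to the same fmaj‑labelling. Hence, writing $(\tau',S'):=\phi^{|}_{\alpha,k-1}(i,(\pi,S))\in\B^{>}_{n,k-1}$ for the pair obtained after steps (1)--(2), Lemma~\ref{lem+fmaj+bar} with $k$ replaced by $k-1$ gives
\begin{align*}
 \fmaj((\tau',S')) &= \fmaj((\pi,S))+2i && (\alpha=n,\ i\in[n-k]),\\
 \fmaj((\tau',S')) &= \fmaj((\pi,S))+2i-1 && (\alpha=\overline{n},\ i\in[n-k]),\\
 \fmaj((\tau',S')) &= \fmaj((\pi,S))+2n-2k+1 && (\alpha=\overline{n},\ i=0).
\end{align*}
Here one checks that the index ranges are consistent: $\B^{>}_{n-1,k-1}$ has $n-k+1$ unstarred slots labelled $0,1,\dots,n-k$, so $\phi^{|}_{\alpha,k-1}$ is defined on $\{0,\dots,n-k\}$, which contains both $\{1,\dots,n-k\}$ (the domain of $\phi^{*}_{n,k}$) and $\{0,\dots,n-k\}$ (the domain of $\phi^{*}_{\overline{n},k}$); and the three displayed cases are precisely parts (a), (b), (c) of Lemma~\ref{lem+fmaj+bar} under the substitution $k\mapsto k-1$.

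The remaining input is step (3), which puts a star on the rightmost descent of $\tau'$ without altering the signed permutation itself, so the final output $(\tau,T)$ satisfies $\tau=\tau'$ and $\fmaj(\tau)=\fmaj(\tau')$; only the subtracted sum in the definition of $\fmaj((\cdot,\cdot))$ changes. First I would verify that the rightmost descent $d_0$ of $\tau'$ is not already starred: when $\alpha$ is inserted in some slot with label $i\ge 1$ this is the observation recorded just before Lemma~\ref{lem+fmaj+bar}, and when $\overline{n}$ is inserted in the last slot ($i=0$) the rightmost descent of $\tau'$ is the newly created descent at position $n-1$, which is certainly unstarred. Thus $T=S'\cup\{d_0\}$ is a legitimate element of $\B^{>}_{n,k}$, and since $d_0$ is the unique descent of $\tau'$ that is $\ge d_0$, and each term $2\,|\Des_B(\tau')\cap\{j,\dots,n-1\}|-1$ depends only on $\Des_B(\tau')$ and not on the star set,
\[
 \fmaj((\tau,T))-\fmaj((\tau',S'))=-\bigl(2\,|\Des_B(\tau')\cap\{d_0,\dots,n-1\}|-1\bigr)=-(2\cdot 1-1)=-1.
\]

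Putting the two steps together yields $\fmaj(\phi^{*}_{n,k}(i,(\pi,S)))=\fmaj((\pi,S))+2i-1$ for $i\in[n-k]$, $\fmaj(\phi^{*}_{\overline{n},k}(i,(\pi,S)))=\fmaj((\pi,S))+2i-2$ for $i\in[n-k]$, and $\fmaj(\phi^{*}_{\overline{n},k}(0,(\pi,S)))=\fmaj((\pi,S))+2n-2k$, which are exactly (a), (b), (c). The part I expect to require the most care is the bookkeeping rather than any new idea: making sure the substitution $k\mapsto k-1$ is applied consistently both to the ``$n-k$'' ranges and to the additive constant $2n-2k+1$, and the case check that $d_0\notin S'$ so that step (3) is well defined; once those are settled the rest is the short computation above.
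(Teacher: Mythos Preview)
Your proof is correct and follows exactly the approach the paper sketches just before the lemma: factor the map through the star-preserving insertion (your $\phi^{|}_{\alpha,k-1}$), invoke Lemma~\ref{lem+fmaj+bar} with $k\mapsto k-1$, and then observe that starring the rightmost descent subtracts exactly~$1$ from $\fmaj$. If anything, you are more careful than the paper, which writes ``same effect with $\phi^{|}_{\alpha,k}$'' without making the index shift explicit and omits the check that the rightmost descent of $\tau'$ is unstarred; your treatment of both points is correct.
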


By definitions \eqref{def:phi1} and \eqref{def:phi*},
the mappings $\phi^{|}_{\alpha,k}$ and $\phi^{*}_{\alpha,k}$ ($\alpha=n$ or $\overline{n}$) have
their images $\I_0\cup \I_1$ and $\I_2$, respectively, where
\begin{align*}
\I_0&=\{(\pi, S) \in \B^{>}_{n,k}: \pi_n=n\};\\
\I_1&=\{(\pi, S) \in \B^{>}_{n,k}: \text{rightmost descent is unstarred in}~(\pi, S)~\text{and}~ \pi_n \neq n\};\\
\I_2&=\{(\pi, S) \in \B^{>}_{n,k}: \text{rightmost descent is starred in}~ (\pi, S)~\text{and}~ \pi_n \neq n\}.
\end{align*}
Obviously, the disjoint union of those three sets is $\B^{>}_{n,k}$.
Now, we are ready to prove the following recurrence relation for the polynomial $B^{\fmaj}_{n,k}(q)$ defined in~\eqref{def+fmaj+B}.
\begin{prop}\label{prop+rec+maj+B}
For $n\geq 1$ we have the recurrence relation
\begin{equation*}
  B^{\fmaj}_{n,k}(q)=[2n-2k]_q\,B^{\fmaj}_{n-1,k}(q)+[2n-2k+1]_q\,B^{\fmaj}_{n-1,k-1}(q),
\end{equation*}
where $B^{\fmaj}_{n,k}(q)$ is $1$ when $k=n$ and is $0$ when $k<0$ or $k>n$.
\end{prop}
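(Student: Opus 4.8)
The strategy is to use the decomposition $\B^{>}_{n,k} = \I_0 \sqcup \I_1 \sqcup \I_2$ together with the insertion maps $\phi^{|}_{n,k}, \phi^{|}_{\overline{n},k}, \phi^{*}_{n,k}, \phi^{*}_{\overline{n},k}$ and the $\fmaj$-tracking given by Lemmas~\ref{lem+fmaj+bar} and~\ref{lem+fmaj+star}. The first step is to observe that $\I_0 \cup \I_1$ is exactly the image of the maps $\phi^{|}_{n,k}$ and $\phi^{|}_{\overline{n},k}$, while $\I_2$ is exactly the image of $\phi^{*}_{n,k}$ and $\phi^{*}_{\overline{n},k}$; moreover one should check that each of these four maps is injective, and that the images of $\phi^{|}_{n,k}$ and $\phi^{|}_{\overline{n},k}$ are disjoint (an element of $\I_0$, i.e.\ with $\pi_n=n$, comes from $\phi^{|}_{n,k}$, whereas an element of $\I_1$ with $\pi_n\neq n$ may come from either map depending on the sign of the last letter — this bookkeeping is where one must be careful). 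Concretely, deleting $n$ (resp.\ $\overline n$, whichever is present as the largest-modulus letter) from $(\pi,S)$ and moving the stars to the right of that letter back one descent to the right recovers the preimage, so the four maps jointly biject $\{0\}\cup[n-k-1]$- and $[n-k]$-indexed copies of $\B^{>}_{n-1,k}$ and $\B^{>}_{n-1,k-1}$ onto $\B^{>}_{n,k}$.

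The second step is to sum $q^{\fmaj((\pi,S))}$ over each piece. For the $\phi^{|}$-maps landing in $\I_0\cup\I_1$: inserting $n$ at fmaj-label $i\in\{0\}\cup[n-k-1]$ contributes $q^{2i}$ by Lemma~\ref{lem+fmaj+bar}(a), and inserting $\overline n$ at fmaj-label $i\in[n-k-1]$ contributes $q^{2i-1}$ by Lemma~\ref{lem+fmaj+bar}(b), while $\overline n$ at label $0$ contributes $q^{2n-2k-1}$ by Lemma~\ref{lem+fmaj+bar}(c). Summing the weights gives
\begin{equation*}
\sum_{i=0}^{n-k-1} q^{2i} \;+\; \sum_{i=1}^{n-k-1} q^{2i-1} \;+\; q^{2n-2k-1}
= \sum_{j=0}^{2n-2k-1} q^{j} = [2n-2k]_q,
\end{equation*}
so this portion of the sum equals $[2n-2k]_q\,B^{\fmaj}_{n-1,k}(q)$. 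For the $\phi^{*}$-maps landing in $\I_2$: Lemma~\ref{lem+fmaj+star} gives weight $q^{2i-1}$ for $n$ inserted at $i\in[n-k]$, weight $q^{2i-2}$ for $\overline n$ inserted at $i\in[n-k]$, and weight $q^{2n-2k}$ for $\overline n$ at label $0$; summing,
\begin{equation*}
\sum_{i=1}^{n-k} q^{2i-1} \;+\; \sum_{i=1}^{n-k} q^{2i-2} \;+\; q^{2n-2k}
= \sum_{j=0}^{2n-2k} q^{j} = [2n-2k+1]_q,
\end{equation*}
so this portion equals $[2n-2k+1]_q\,B^{\fmaj}_{n-1,k-1}(q)$. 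Adding the two contributions yields the claimed recurrence. The boundary cases ($k=n$, where $\B^{>}_{n,n}$ consists of the single fully-starred decreasing arrangement of $\{0,\overline 1,\dots,\overline n\}$ with $\fmaj=0$, hence $B^{\fmaj}_{n,n}(q)=1$; and $k<0$ or $k>n$, where the set is empty) are checked directly.

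The main obstacle is the first step: verifying that the four insertion maps, restricted to their stated domains, are simultaneously injective and have images covering $\B^{>}_{n,k}$ without overlap. One must confirm that given $(\pi,S)\in\B^{>}_{n,k}$, the largest-modulus letter $\pm n$ occupies a well-defined position, that deleting it and shifting stars produces a legitimate element of $\B^{>}_{n-1,\cdot}$, and that the fmaj-label of the vacated slot lies in precisely the index range attached to whichever of the four maps applies (this is what pins down, e.g., that $\phi^{|}_{\overline n,k}$ needs the special case $i=0$ separated out). Once this combinatorial bijection is in place, the $\fmaj$-accounting is purely Lemmas~\ref{lem+fmaj+bar}–\ref{lem+fmaj+star} plus the two geometric-series identities above, so the remainder is routine.
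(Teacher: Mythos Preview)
Your proposal is correct and follows essentially the same route as the paper: the same decomposition $\B^{>}_{n,k}=\I_0\sqcup\I_1\sqcup\I_2$, the same four insertion maps, and the same application of Lemmas~\ref{lem+fmaj+bar} and~\ref{lem+fmaj+star} to produce the two geometric-series identities summing to $[2n-2k]_q$ and $[2n-2k+1]_q$. If anything, you are more explicit than the paper about why the four maps jointly give a bijection onto $\B^{>}_{n,k}$, which the paper asserts without detailed verification.
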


\begin{proof}
Since $\B^{>}_{n,k}$ is the disjoint union of the images of mappings $\phi^{|}_{\alpha,k}$ and $\phi^{*}_{\alpha,k}$, we have
\begin{align}\label{eq+fmaj+decom+map}
     B^{\fmaj}_{n,k}(q)
  &= \sum_{(\pi, S)\in \I_0}q^{\fmaj((\pi, S))}
    +\sum_{(\pi, S)\in \I_1}q^{\fmaj((\pi, S))}
    +\sum_{(\pi, S)\in \I_2}q^{\fmaj((\pi, S))}.
\end{align}
By the definition of mapping $\phi^{|}_{\alpha,k}$ and Lemma \ref{lem+fmaj+bar},
the first two summations of identity~\eqref{eq+fmaj+decom+map} is
\begin{align}\label{eq+fmaj+decom+bar}
 &\quad  \sum_{(\pi, S)\in \I_0}q^{\fmaj((\pi, S))}
     +\sum_{(\pi, S)\in \I_1}q^{\fmaj((\pi, S))} \nonumber\\
 &=   \sum_{i=0}^{n-k-1}\sum_{(\pi,S)\in \B^{>}_{n-1,k}}q^{\fmaj(\phi^{|}_{n,k}(i,(\pi,S)))}
     +\sum_{i=0}^{n-k-1}\sum_{(\pi,S)\in \B^{>}_{n-1,k}}q^{\fmaj(\phi^{|}_{\overline{n},k}(i,(\pi,S)))}   \nonumber\\
 &=   \sum_{(\pi,S)\in \B^{>}_{n-1,k}}q^{\fmaj((\pi,S))}
      \left(\sum_{i=0}^{n-k-1}q^{2i}+\sum_{i=1}^{n-k-1}q^{2i-1}+q^{2n-2k-1}\right)                        \nonumber\\
 &= [2n-2k]_q\,B^{\fmaj}_{n-1,k}(q).
\end{align}

Similarly, by the definition of mapping $\phi^{*}_{\alpha,k}$ and Lemma \ref{lem+fmaj+star},
the last summation of identity~\eqref{eq+fmaj+decom+map} is
\begin{align}\label{eq+fmaj+decom+star}
  \sum_{(\pi, S)\in \I_2}q^{\fmaj((\pi, S))}
  & = \sum_{i=1}^{n-k}\sum_{(\pi,S)\in \B^{>}_{n-1,k-1}}q^{\fmaj(\phi^{*}_{n,k}(i,(\pi,S)))}
     +\sum_{i=0}^{n-k}\sum_{(\pi,S)\in \B^{>}_{n-1,k-1}}q^{\fmaj(\phi^{*}_{\overline{n},k}(i,(\pi,S)))}      \nonumber\\
  & = \sum_{(\pi,S)\in \B^{>}_{n-1,k-1}}q^{\fmaj((\pi,S))}
      \left(\sum_{i=1}^{n-k}q^{2i-1}+\sum_{i=1}^{n-k}q^{2i-2}+q^{2n-2k} \right)                             \nonumber\\
  &=  [2n-2k+1]_q\,B^{\fmaj}_{n-1,k-1}(q).
\end{align}\label{eq+fmaj+decom+star+fina}
Combining~\eqref{eq+fmaj+decom+map}-\eqref{eq+fmaj+decom+star} completes the  proof.
\end{proof}


\begin{proof}[\bf Proof of Theorem~\ref{thm+stirling+frobenius+B}]
By Proposition~\ref{prop+iden+B+fmaj} we can rewrite
identity~\eqref{eq+eulerian+stir+B+q+trans} as
\begin{equation}\label{eq+stirling+B+Euler}
    [2]_q^k[k]_{q^2}!S_B[n,k]
  = \sum_{\ell=0}^{k}q^{(n-k)(2\ell-n-k)}B_{n,n-\ell}(q)
    {n-\ell \brack k-\ell}_{q^2}.
\end{equation}
Let $S^{o}_B[n,k]$ be the left-hand side of \eqref{eq+stirling+B+Euler}.
It follows from  Eq.~\eqref{def+stirling+B+q} that the sequence $(S^{o}_B[n,k])_{0\leq k\leq n}$
is determined by the recurrence relation
\begin{equation}\label{def+stirling+B+q+order}
     S^{o}_B[n,k]
  := [2k]_q\,S^{o}_B[n-1,k-1] + [2k+1]_q\,S^{o}_B[n-1,k]
\end{equation}
with  $S^{{o}}_B[0,k]=\delta_{0k}$.
Invoking Proposition~\ref{prop+rec+maj+B} we see that 
the polynomials $B^{\fmaj}_{n,n-k}(q)$ satisfy recurrence relation~\eqref{def+stirling+B+q+order}, namely
\begin{equation*}
  B^{\fmaj}_{n,n-k}(q)=S^{o}_B[n,k].
\end{equation*}
Combining with Proposition~\ref{prop+fmaj+Euler+B},
we have  a combinatorial proof of~\eqref{eq+stirling+B+Euler}.
\end{proof}

\section{$q$-Stirling numbers of the second kind in type D}\label{section+generalization}

Recently, Bagno et al. \cite{BBG19} studied some identities about the type D Stirling numbers of the second kind $S_D(n,k)$. 
As far as we know, there is no $q$-Stirling numbers of the second kind in type D in the literature.
In this section, we first define a $q$-Stirling numbers of the second kind in type D
and prove $q$-analogues of two known results about the Stirling numbers of the second kind in types A, B and D,
see Proposition \ref{prop+rel+stirling+q+A+B+D}.
Then, we establish a $q$-identity connecting the $q$-falling factorials of type D and the $q$-Stirling numbers of the second kind in type D, see Proposition \ref{prop+stir+D+q}.

\subsection{Two $q$-identities about the $q$-Stirling numbers of the second kind}

Using the definitions and notations of ordered signed partition in Subsection \ref{subsection+B+order+partition},
we say that the set $\{T_0,T_1,T_2,\ldots,T_{2k}\}$ is a \emph{signed partition} of $\langle n\rangle$
if $(T_0,T_1,T_2,\ldots,T_{2k})$ is an ordered signed partition.
A signed partition $\pi=\{T_0,T_1,T_2,\ldots,T_{2k}\}$ of $\langle n\rangle$ is
called \emph{type D} if $\#T_0\neq 3$, where $\#T$ denotes the cardinality of a finite set $T$,
in other words, the block $T_0$ contains at least two positive elements or only contains 0.
Let $S_D(n,k)$ be the number of all type D signed partitions of $\langle n\rangle$ with $2k+1$ blocks,
see an equivalent definition of $S_D(n,k)$ in \cite{BBG19}.
The numbers $S_D(n,k)$ are called the \emph{Stirling numbers of the second kind in type D}.

For $0 \leq k \leq n$, the following two identities about
the Stirling numbers of the second kind in types A, B and D were implicitly given
in~\cite[Corollary 12]{Za81}, \cite[Eq. (19)]{CG07} and~\cite[Proposition $3$]{Sut00}:
\begin{align}
     S_B(n,k) &=\sum_{j=k}^{n}2^{j-k}\binom{n}{j}S(j,k); \label{rel+stirling+AB}  \\
     S_B(n,k) &= S_D(n,k)+n\cdot2^{n-k-1}S(n-1,k).       \label{rel+stirling+ABD}
\end{align}
In this subsection, we define a kind of type D $q$-Stirling numbers of the second kind $S_D[n,k]$,
and give $q$-analogues of identities~\eqref{rel+stirling+AB} and \eqref{rel+stirling+ABD}.

\begin{defn}
For any $S \subset \Z\backslash\{0\}$ let $\overline{S}=\{\overline{i}: i\in S\}$.
A \emph{standard signed partition} ({\sc SSP} for short) of $S$ is
a sequence $\pi=(S_1,S_2,\ldots,S_k)$ of disjoint nonempty subsets of $S\cup\overline{S}$ such that
\begin{enumerate}
\item [\rm (1)] $\{S_1,\ldots,S_k,\overline{S}_1,\ldots,\overline{S}_k\}$ is a partition
of $S\cup\overline{S}$;
\item [\rm (2)]  $\min|S_1| \leq \min|S_2| \leq \cdots \leq \min|S_k|$, where  $|S_i|=\{|j|:j\in S_i\}$ for $i\in [k]$.
\end{enumerate}
The sets $S_1,S_2,\ldots,S_k$ are the \emph{blocks} of $\pi$ (so $\pi$ has $k$ blocks).
A \emph{partial standard signed partition} ({\sc PSSP} for short) of $S$ is a standard signed partition of a subset of $S$.
\end{defn}

Let $B(S,k)$ (resp., $B_{\subseteq}(S,k)$) be the set of all {\sc SSP} (resp., {\sc PSSP}) of $S$ with $k$ blocks.
Let $D_{\subseteq}([n],k)$ denote the set of all {\sc PSSP} of $[n]$ that
excludes all {\sc SSP} of $[n]\backslash\{i\}$ with $k$ blocks for $i \in [n]$, namely,
\begin{equation*}
    D_{\subseteq}([n],k)
  = B_{\subseteq}([n],k) \backslash \bigcup_{i=1}^{n}B([n]\backslash \{i\},k).
\end{equation*}

\begin{lem}\label{lem+type+D}
For $0 \leq k \leq n$ we have
\begin{equation*}
2^kS_D(n,k)=\#D_{\subseteq}([n],k).
\end{equation*}
\end{lem}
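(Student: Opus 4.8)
The plan is to establish a bijection (or a weight-preserving correspondence on cardinalities) between $D_{\subseteq}([n],k)$ and a set of size $2^kS_D(n,k)$, the latter being most naturally realized as the set of type D ordered signed partitions of $\langle n\rangle$ with $2k+1$ blocks that "forget the order among paired blocks" in exactly the right way. Concretely, recall that a type D signed partition is a set $\{T_0,T_1,\dots,T_{2k}\}$ with $T_0=\overline{T}_0$, $\#T_0\neq 3$, and the remaining blocks paired by negation; there are $S_D(n,k)$ of these, and choosing for each of the $k$ pairs which member is "first" multiplies the count by $2^k$. So the target object is: a type D signed partition together with a choice of orientation for each of the $k$ negation-pairs. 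I would set up a map $\Phi$ from such oriented objects to PSSP's of $[n]$ by taking, for each oriented pair, its distinguished member, intersecting with the chosen "positive-side" representatives, and reading off the induced partition of a subset of $[n]$; the subset that gets partitioned is exactly $[n]$ minus the positive elements that land in $T_0$.

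First I would make precise how an oriented type D signed partition of $\langle n\rangle$ with $2k+1$ blocks determines a PSSP of $[n]$ with $k$ blocks: delete the zero block $T_0$, keep the $k$ distinguished blocks $S_1,\dots,S_k$ (one per oriented pair), note these are disjoint nonempty subsets of $[\pm n]$ whose union together with their negatives is $[\pm n]\setminus T_0$, and then reorder them so that $\min|S_1|\le\cdots\le\min|S_k|$ to satisfy axiom (2) of SSP/PSSP — the orientation data is precisely what lets us pick a canonical representative of each pair, and the reordering is harmless because the pairs are unordered in a signed partition. The resulting sequence is a PSSP of $[n]$ on the index set $[n]\setminus\{\,j\in[n] : j\in T_0\,\}$. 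Second I would check that the image avoids every $B([n]\setminus\{i\},k)$: a PSSP built this way is an SSP of $[n]\setminus\{i\}$ exactly when $T_0\cap[n]=\{i\}$ is a singleton, i.e. $\#T_0=3$, which is forbidden in type D; conversely, any PSSP not hitting a single deleted positive element comes from a $T_0$ with $\#(T_0\cap[n])\neq 1$, i.e. $\#T_0\in\{1\}\cup\{5,7,\dots\}$, which is exactly the type D condition $\#T_0\neq 3$. Third I would construct the inverse: given a PSSP $(S_1,\dots,S_k)$ of $[n]$ on some subset $A\subseteq[n]$, recover $T_0$ as $\{0\}\cup(([n]\setminus A)\cup\overline{([n]\setminus A)})$, recover the $k$ pairs as $\{S_i,\overline{S}_i\}$, and recover the orientation by declaring $S_i$ distinguished; the condition $|[n]\setminus A|\neq 1$ (equivalent to excluding the $B([n]\setminus\{i\},k)$'s) is exactly what makes $\#T_0\neq 3$, so the output is a genuine oriented type D object.

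The main obstacle — and the step I would write out most carefully — is verifying that $\Phi$ is genuinely a bijection rather than a many-to-one or partially-defined map: one must confirm that the reordering step in axiom (2) does not collapse distinct oriented partitions (it cannot, since from an SSP one reads back the \emph{set} of blocks, and orientation is the choice of which element of each pair is named, which survives), and that every PSSP in $D_{\subseteq}([n],k)$ — in particular those where $A=[n]$, so $T_0=\{0\}$ — is hit (here $\#T_0=1\neq 3$, fine). A secondary subtlety is the boundary behavior when some $S_i$ could in principle be empty or when $k=0$; the definitions force $k$ nonempty blocks and $S_D(n,0)=\delta_{n0}$-type edge cases match $\#D_{\subseteq}([n],0)$ directly, so these are quick checks. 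Once $\Phi$ is shown bijective, the identity $2^kS_D(n,k)=\#D_{\subseteq}([n],k)$ is immediate since the source set has cardinality $2^k S_D(n,k)$ by the orientation-counting remark.
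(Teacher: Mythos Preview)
Your proposal is correct and follows essentially the same approach as the paper: both establish the correspondence between a PSSP $(S_1,\ldots,S_k)\in D_{\subseteq}([n],k)$ and the type D signed partition $\{T_0,S_1,\overline{S}_1,\ldots,S_k,\overline{S}_k\}$ with $T_0=\langle n\rangle\setminus\bigcup_i(S_i\cup\overline{S}_i)$, observing that the type D condition $\#T_0\neq 3$ matches the exclusion of each $B([n]\setminus\{i\},k)$, and that the $2^k$ comes from the choice of $S_i$ versus $\overline{S}_i$ in each pair. One small correction: your aside that ``$S_D(n,0)=\delta_{n0}$'' is not quite right (in fact $S_D(n,0)=1$ for $n\neq 1$ and $S_D(1,0)=0$), but this does not affect your argument since the bijection handles $k=0$ uniformly.
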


\begin{proof}
For any {\sc PSSP} $\pi=(T_1, T_2, \ldots, T_k) \in D_{\subseteq}([n],k)$,
it is clear that the sequence $(\langle n \rangle \backslash \{T\cup\overline{T}\}, T_1, \overline{T}_1, \ldots, T_k,\overline{T}_k)$
is an ordered signed partition of the set $\langle n\rangle$, where $T=\cup_{i=1}^kT_i$.
Thus, the set
\begin{equation*}
\Pi=  \left\{\langle n \rangle \backslash \{T\cup\overline{T}\},
      T_1, \overline{T}_1, \ldots, T_k,\overline{T}_k\right\}
\end{equation*}
is a type D signed partition of $\langle n \rangle$.
Due to the choice of $T_i$ and $\overline{T_i}$,
both {\sc PSSP} $\pi=(T_1, \ldots, T_i, \ldots, T_k)$
and $\pi'=(T_1, \ldots, \overline{T_i}, \ldots, T_k)$ correspond to the type D signed partition $\Pi$,
which implies the desired result.
\end{proof}

\begin{defn}
For $\pi=(S_1, S_2, \ldots, S_k)\in B_{\subseteq}(S,k)$, define the statistics
\begin{equation*}
  \pos(\pi):=\#\left\{ x \in \bigcup_{i=1}^k  S_i: x > 0 \right\}
\end{equation*}
and
\begin{equation}\label{def+m}
  m(\pi) := 2\sum_{i=1}^ki\cdot\#S_i-\pos(\pi).
\end{equation}
\end{defn}
The following result was incorrectly stated in
\cite[Proposition 4.2]{CG07} 
with 
$
m(\pi) = 2\sum_{i=1}^k(i-1)\#S_i+n+1-\pos(\pi).
$
For completeness, we reproduce  their proof with correction.
\begin{prop}\label{prop+stir+B+q+iden}
Let $m(\pi)$ be defined by \eqref{def+m}. Then we have
\begin{equation}\label{eq=CG}
  q^{k^2}[2]_q^{k}S_B[n,k]=\sum_{\pi \in B_{\subseteq}([n],k)}q^{m(\pi)}
\end{equation}
for $0 \leq k \leq n$.
\end{prop}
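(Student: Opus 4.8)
The plan is to prove \eqref{eq=CG} by induction on $n$, showing that the right-hand side $F(n,k):=\sum_{\pi \in B_{\subseteq}([n],k)}q^{m(\pi)}$ satisfies the same recurrence as the left-hand side. Recall that $S_B[n,k]$ obeys $S_B[n,k]=S_B[n-1,k-1]+[2k+1]_q S_B[n-1,k]$, so if we set $G(n,k):=q^{k^2}[2]_q^{k}S_B[n,k]$ then $G$ satisfies
\begin{equation*}
G(n,k)=q^{k^2}[2]_q^k S_B[n-1,k-1]+q^{k^2}[2]_q^k[2k+1]_q S_B[n-1,k].
\end{equation*}
Rewriting in terms of $G(n-1,\cdot)$, using $q^{k^2}=q^{(k-1)^2}q^{2k-1}$ and $[2]_q^k=[2]_q\cdot[2]_q^{k-1}$, gives
\begin{equation*}
G(n,k)=q^{2k-1}[2]_q\,G(n-1,k-1)+[2k+1]_q\,G(n-1,k),
\end{equation*}
with $G(0,k)=\delta_{0k}$. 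So it suffices to establish the same recurrence $F(n,k)=q^{2k-1}[2]_q F(n-1,k-1)+[2k+1]_q F(n-1,k)$ together with $F(0,k)=\delta_{0k}$.

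The combinatorial heart is a decomposition of $B_{\subseteq}([n],k)$ according to the fate of the element $n$ (and whether $n$ or $\overline n$ appears at all) in a \textsc{PSSP} $\pi=(S_1,\dots,S_k)$. First I would observe that because of the ordering condition $\min|S_1|\le\cdots\le\min|S_k|$, the element $n$, if present, can never be the minimum of its block unless that block is a singleton appended at the very end; more precisely, deleting $n$ or $\overline n$ from $\pi$ (when it occurs) yields a \textsc{PSSP} of $[n-1]$, and conversely we recover all of $B_{\subseteq}([n],k)$ by either (a) not using $n$ at all — giving the $F(n-1,k)$ piece, but with a correction since $n$ ``not used'' is one specific choice — or (b) inserting $n$ or $\overline n$ into an existing block $S_i$ of a \textsc{PSSP} of $[n-1]$ with $k$ blocks, or (c) inserting $\{n\}$ or $\{\overline n\}$ as a new last block, which moves us from $k-1$ blocks to $k$ blocks. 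In case (b), inserting $\pm n$ into $S_i$ changes $m(\pi)$ by $2i$ (from the $2\sum i\cdot\#S_i$ term) and by $0$ or $-1$ (from $-\pos$), so the two choices contribute $q^{2i}(1+q^{-1})\cdot q^{m(\pi')}=q^{2i-1}[2]_q\,q^{m(\pi')}$; summing over $i\in[k]$ gives $\sum_{i=1}^k q^{2i-1}[2]_q=[2]_q\cdot q\cdot[2]_q\cdot\frac{q^{2k}-1}{q^2-1}$, which I must reconcile with $[2k+1]_q$. Here I expect the bookkeeping to require care: together with the "$n$ not used" term (which contributes $q^{m(\pi')}\cdot 1$) the total over case (b) plus the empty option should be exactly $[2k+1]_q\,F(n-1,k)$, using $1+\sum_{i=1}^k q^{2i-1}(1+q)= 1+q\sum_{i=1}^k(q^{2i-2}+q^{2i-1})=1+q[2k]_q=[2k+1]_q$. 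In case (c), appending $\{\pm n\}$ as the new $k$th block to a \textsc{PSSP} of $[n-1]$ with $k-1$ blocks adds $2k\cdot 1$ to the first term of $m$ and $0$ or $-1$ to $-\pos$, contributing $q^{2k}(1+q^{-1})q^{m(\pi')}=q^{2k-1}[2]_q\,q^{m(\pi')}$, and summing over $\pi'\in B_{\subseteq}([n-1],k-1)$ gives exactly $q^{2k-1}[2]_q\,F(n-1,k-1)$.

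Combining, $F(n,k)=q^{2k-1}[2]_q\,F(n-1,k-1)+[2k+1]_q\,F(n-1,k)$, matching the recurrence for $G(n,k)$; checking the base case $F(0,k)=\delta_{0k}$ (the only \textsc{PSSP} of $[0]=\emptyset$ being the empty sequence, which has $k=0$) closes the induction. The main obstacle I anticipate is making the insertion/deletion map genuinely bijective: I need to verify that inserting $\pm n$ into block $S_i$ of a valid \textsc{PSSP} of $[n-1]$ always yields a valid \textsc{PSSP} of $[n]$ (the ordering condition $\min|S_1|\le\cdots\le\min|S_k|$ is preserved because $|n|=n$ exceeds every element of $[n-1]$, so it never becomes a new block-minimum when inserted into a nonempty block), that appending $\{\pm n\}$ as a last block likewise respects the ordering, and — crucially — that these operations partition $B_{\subseteq}([n],k)$ with no overlaps and no omissions; in particular one must check that the "do not use $n$" option is the unique preimage accounting for the lone $1$ in $1+q[2k]_q=[2k+1]_q$, and that a \textsc{PSSP} containing $\{n\}$ or $\{\overline n\}$ as a non-final singleton block is impossible. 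Once the bijection is pinned down, the weight computations above are routine.
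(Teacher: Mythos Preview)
Your proposal is correct and follows essentially the same approach as the paper: both define the right-hand side as a polynomial in $q$, derive the recurrence $F(n,k)=q^{2k-1}[2]_q\,F(n-1,k-1)+[2k+1]_q\,F(n-1,k)$ by classifying each \textsc{PSSP} of $[n]$ according to whether $n$ (or $\overline{n}$) is absent, lies in an existing block $S_i$, or forms a new singleton last block, and then match this against the recurrence for $q^{k^2}[2]_q^k S_B[n,k]$. Your weight computations and your bijectivity checks (in particular that a singleton $\{n\}$ or $\{\overline n\}$ must be the final block) coincide with the paper's argument.
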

\begin{proof}
Let
\begin{equation*}
S_B(n,k,q)=\sum_{\pi \in B_{\subseteq}([n],k)}q^{m(\pi)}.
\end{equation*}
By recurrence \eqref{def+stirling+B+q} of $S_B[n,k]$,
it suffices to show that $S_B(n,k,q)$ satisfies
\begin{equation*}
S_B(n,k,q)=q^{2k-1}(1+q)S_B(n-1,k-1,q)+[2k+1]_qS_B(n-1,k,q)
\end{equation*}
with the initial conditions $S_B(0,k,q)=\delta_{0k}$ for $q\neq 0$.
The case $n=0$ is trivial.
Suppose that $n > 0$ and $\pi=(T_1,\ldots,T_k) \in B_{\subseteq}([n],k)$.
If $\{n\}$ (resp., $\{-n\}$) is a block of $\pi$,
then $\{n\}=T_k$ (resp., $\{-n\}=T_k$) and removing it from $\pi$ yields
a {\sc PSSP} $\tau$ of $[n-1]$ into $k-1$ blocks,
such that  $\pos(\tau)=\pos(\pi)-1$ (resp., $\pos(\tau)=\pos(\pi)$)
and  $m(\pi)=m(\tau)+2k-1$ (resp., $m(\pi)=m(\tau)+2k$)~\footnote{In the proof of 
\cite[Proposition 4.2]{CG07} 
with $m(\pi) = 2\sum_{i=1}^k(i-1)\#S_i+n+1-\pos(\pi)$ the equation  $m(\pi)=m(\tau)+2k-1$ (resp., $m(\pi)=m(\tau)+2k$) does not hold.}.

If $n$ is an element of $T_i$ for some $i \in [k]$, then removing it from $T_i$ yields
a {\sc PSSP} $\tau'$ of $[n-1]$ into $k$ blocks such that  $\pos(\tau')=\pos(\pi)-1$
and  $m(\pi)=m(\tau')+2i-1$.
Similarly, if $-n$ is an element of $T_i$ for some $i\in [k]$,
then removing it from $T_i$ yields a {\sc PSSP} $\tau'$ of $[n-1]$ into $k$ blocks
such that $\pos(\tau')=\pos(\pi)$ and $m(\pi)=m(\tau')+2i$.
If neither $n$ nor $-n$ is in any block of $\pi$, then $\pi \in B_{\subseteq}([n-1],k)$.

Thus
\begin{align*}
    S_B(n,k,q)
 &= q^{2k-1}(1+q)\sum\limits_{\pi \in B_{\subseteq}([n-1],k-1)} q^{m(\pi)}
    +(1+q)\sum_{i=1}^kq^{2i-1}\sum\limits_{\tau' \in B_{\subseteq}([n-1],k)}q^{m(\tau')}\\
 & \quad   +\sum\limits_{\pi \in B_{\subseteq}([n-1],k)} q^{m(\pi)}\\
 &= [2k+1]_qS_B(n-1,k,q)+q^{2k-1}(1+q)S_B(n-1,k-1,q).
\end{align*}
This finishes the proof.
\end{proof}


\begin{defn}
We define the \emph{$q$-Stirling numbers of the second kind in type D} by
\begin{equation}\label{def+stirling+D+q}
S_D[n,k]:=\frac{1}{q^{k^2}[2]_q^{k}}\sum_{\pi \in D_{\subseteq}([n],k)}q^{m(\pi)}.
\end{equation}
\end{defn}

The following results are $q$-analogues of identities \eqref{rel+stirling+AB} and \eqref{rel+stirling+ABD},
which also show that $S_D[n,k]$ is a polynomial in $q$.
Let $S[n,k]_{q^2}$ denote $S[n,k]$ with $q$ replaced by $q^2$, i.e.,
\begin{equation*}
S[n,k]_{q^2}:=S[n,k]\, \big|_{q\leftarrow q^2}.
\end{equation*}

\begin{prop}\label{prop+rel+stirling+q+A+B+D}
Let $S_D[n,k]$ be defined by~\eqref{def+stirling+D+q}. Then the identities
\begin{align}
 S_B[n,k] &= \sum_{j=k}^n\binom{n}{j}[2]_{q}^{j-k}q^{j-k}S[j,k]_{q^2},\label{rel+stirling+AB+q}\\
 S_B[n,k] &= S_D[n,k]+n\cdot [2]_q^{n-k-1}q^{n-k-1}S[n-1,k]_{q^2}     \label{rel+stirling+ABD+q}
\end{align}
hold for $0 \leq k \leq n$.
\end{prop}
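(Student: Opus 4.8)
The plan is to deduce both identities from a single ``master'' evaluation, which also automatically forces $S_D[n,k]$ to be a polynomial. For $0\le k\le m$ write
\[
  T_B(m,k;q):=\sum_{\pi\in B([m],k)}q^{m(\pi)}
\]
(so $T_B(0,0;q)=1$, $T_B(m,k;q)=0$ for $k>m$, and $T_B(m,0;q)=0$ for $m\ge 1$), the $m$-generating function over \emph{ordinary} standard signed partitions of $[m]$ with $k$ blocks. The first task is the key lemma
\[
  T_B(m,k;q)=q^{k^2+m-k}\,[2]_q^{m}\,S[m,k]_{q^2}.
\]
I would prove this by induction on $m$. Given $\pi=(S_1,\dots,S_k)\in B([m],k)$, examine the block containing the letter of absolute value $m$: either $\{m\}$ or $\{-m\}$ is a block, in which case it is forced to be the last block $S_k$; or $m$ lies, with one of its two signs, inside a block $S_i$ with $\#S_i\ge 2$. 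Since $m$ has the largest absolute value, it is never the minimum of its block in the second case, so deleting it from the underlying partition of $[m]$ leaves the blocks ordered by minimal absolute value. Tracking the effect on $m(\pi)=2\sum_i i\,\#S_i-\pos(\pi)$ exactly as in the proof of Proposition~\ref{prop+stir+B+q+iden} — a singleton $S_k=\{m\}$ contributes $q^{2k-1}$, a singleton $S_k=\{-m\}$ contributes $q^{2k}$, and inserting $m$ with sign $+$ (resp.\ $-$) into block $i$ contributes $q^{2i-1}$ (resp.\ $q^{2i}$) — gives the recurrence
\[
  T_B(m,k;q)=q^{2k-1}[2]_q\,T_B(m-1,k-1;q)+q\,[2k]_q\,T_B(m-1,k;q),
\]
where $\sum_{i=1}^{k}(q^{2i-1}+q^{2i})=[2]_q\sum_{i=1}^{k}q^{2i-1}=q[2]_q[k]_{q^2}=q[2k]_q$. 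One then checks that the claimed closed form satisfies this recurrence: substituting it in and dividing, the required equality collapses to the defining recurrence~\eqref{rec+stirling+q+A} for $S[m,k]_{q^2}$, the only nontrivial input being the elementary identity $[2k]_q=[2]_q[k]_{q^2}$. The degenerate cases $m=0$ and $k\in\{0,m\}$ are checked directly.

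Next I would record two decompositions obtained by grouping a partial standard signed partition of $[n]$ according to its \emph{support}, i.e.\ the set of absolute values actually used. This yields the disjoint decomposition $B_{\subseteq}([n],k)=\bigsqcup_{S\subseteq[n]}B(S,k)$, and since the order-preserving relabelling $S\to\{1,\dots,|S|\}$ preserves the block sizes, $\pos$, and hence $m$, one gets $\sum_{\pi\in B(S,k)}q^{m(\pi)}=T_B(|S|,k;q)$. Summing over $S$ gives
\[
  \sum_{\pi\in B_{\subseteq}([n],k)}q^{m(\pi)}=\sum_{j=k}^{n}\binom{n}{j}\,T_B(j,k;q).
\]
Moreover $D_{\subseteq}([n],k)=B_{\subseteq}([n],k)\setminus\bigsqcup_{i=1}^{n}B([n]\setminus\{i\},k)$ is again a disjoint decomposition, because the supports $[n]\setminus\{i\}$ for $1\le i\le n$ are pairwise distinct, and each $B([n]\setminus\{i\},k)$ relabels onto $B([n-1],k)$; hence
\[
  \sum_{\pi\in D_{\subseteq}([n],k)}q^{m(\pi)}=\sum_{\pi\in B_{\subseteq}([n],k)}q^{m(\pi)}-n\,T_B(n-1,k;q).
\]

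Finally I would assemble. Proposition~\ref{prop+stir+B+q+iden} gives $q^{k^2}[2]_q^k S_B[n,k]=\sum_{\pi\in B_{\subseteq}([n],k)}q^{m(\pi)}$; combining it with the first decomposition and the key lemma and dividing by $q^{k^2}[2]_q^k$ yields~\eqref{rel+stirling+AB+q}. Combining Proposition~\ref{prop+stir+B+q+iden}, the defining formula~\eqref{def+stirling+D+q} for $S_D[n,k]$, the second decomposition, and the key lemma applied to $T_B(n-1,k;q)$, then dividing by $q^{k^2}[2]_q^k$, yields~\eqref{rel+stirling+ABD+q}; since the right-hand side of~\eqref{rel+stirling+ABD+q} is manifestly a polynomial in $q$, so is $S_D[n,k]$. (Identity~\eqref{rel+stirling+AB+q} also admits a quick self-contained alternative: using Pascal's rule together with~\eqref{rec+stirling+q+A}, one verifies that its right-hand side satisfies the recurrence~\eqref{def+stirling+B+q} of $S_B[n,k]$ with the same initial values, the only nontrivial input being $[2k+1]_q=1+q[2]_q[k]_{q^2}$.) I expect the main obstacle to be the key lemma, and inside it the bookkeeping of $m(\pi)$ under deletion of $\pm m$: one must verify that the min-ordering of the blocks is undisturbed and that the $k$ possible insertion positions, each with two signs, contribute exactly $q[2k]_q$. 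The two decompositions and the final assembly are then routine.
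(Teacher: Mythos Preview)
Your proposal is correct and follows essentially the same route as the paper: both arguments introduce the generating function $T_B(m,k;q)=\widetilde{B}_{m,k}(q)=\sum_{\pi\in B([m],k)}q^{m(\pi)}$, establish the closed form $q^{k^2+m-k}[2]_q^{m}S[m,k]_{q^2}$ via the insertion recurrence for $\pm m$, and then deduce \eqref{rel+stirling+AB+q} and \eqref{rel+stirling+ABD+q} from the support decompositions of $B_{\subseteq}([n],k)$ and $D_{\subseteq}([n],k)$ together with Proposition~\ref{prop+stir+B+q+iden}. Your parenthetical alternative---checking directly that the right-hand side of \eqref{rel+stirling+AB+q} satisfies the recurrence~\eqref{def+stirling+B+q}---is a small extra observation not in the paper, but the main line of argument is the same.
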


\begin{proof}
We first prove identity \eqref{rel+stirling+AB+q}.
Define the polynomial $\widetilde{B}_{n,k}(q)$ by
\begin{equation*}
  \widetilde{B}_{n,k}(q):=\sum_{\pi \in B([n],k)}q^{m(\pi)}.
\end{equation*}
Let $\pi$ and $\pi'$ be  {\sc SSPs} with $k$ blocks in two different nonempty subsets $\{i_1,i_2,\ldots,i_{\ell}\}$
and $\{j_1,j_2,\ldots,j_{\ell}\}$ of $[n]$, respectively.
Obviously, the set of all {\sc SSPs} $\pi$ of $\{i_1,i_2,\ldots,i_{\ell}\}$ and that of {\sc SSPs} $\pi'$ of $\{j_1,j_2,\ldots,j_{\ell}\}$ are equivalent regardless of the letters.
Then we can rewrite identity \eqref{eq=CG} as
\begin{equation*}
   q^{k^2}[2]_q^{k}S_B[n,k]
 = \sum_{j=k}^{n}\binom{n}{j}\widetilde{B}_{j,k}(q).
\end{equation*}
Thus, to prove identity \eqref{rel+stirling+AB+q}, it is sufficient to show that
\begin{equation*}
[2]_q^{n}q^{k(k-1)+n}S[n,k]_{q^2} = \widetilde{B}_{n,k}(q).
\end{equation*}

Next, we will prove that both sides of the above identity have the same recurrence relation and initial condition.
By the definition of {\sc SSP},
there exist two ways to get a {\sc SSP} of $[n]$ by inserting $n$ or $\overline{n}$ in one of $[n-1]$.

\begin{enumerate}[label=\roman*)]
\item [\rm (i)] The letter $n$ or $\overline{n}$ inserts a {\sc SSP} in $B([n-1],k-1)$
      and forms a new block listing the last position,
      which increases the statistic $m(\pi)$ by $2k-1$ and $2k$, respectively.
\item [\rm (ii)] The letter $n$ or $\overline{n}$ inserts the $i$th block of a {\sc SSP} in $B([n-1],k)$,
      which increases the statistic $m(\pi)$ by $2i-1$ and $2i$, respectively.
\end{enumerate}

From those, we have the recurrence relation
\begin{equation*}
   \widetilde{B}_{n,k}(q)
 = [2]_q\,q^{2k-1}\widetilde{B}_{n-1,k-1}(q)+q\cdot[2]_q\,[k]_{q^2}\widetilde{B}_{n-1,k}(q),
\end{equation*}
with the initial condition $\widetilde{B}_{0,0}(q)=1$.
Due to the recurrence relation~\eqref{rec+stirling+q+A} of $q$-Stirling numbers of the second kind in type A,
the desired result is obtained.

For identity \eqref{rel+stirling+ABD+q},
by identity \eqref{eq=CG} and the definition of $S_D[n,k]$, it suffices to show~that
\begin{equation*}
    n\cdot[2]_q^{n-1}q^{k(k-1)+n-1}S[n-1,k]_{q^2}
  = \sum_{i=1}^n\sum_{\pi \in B([n]\backslash \{i\},k)}q^{m(\pi)},
\end{equation*}
which is immediate by the above discussion.
\end{proof}

\begin{rem}
For nonnegative integers $n \geq k$ with $n \neq 1$, Bagno et al. \cite{BBG19} proved the following identity:
\begin{equation}\label{eq+eulerian+stir+D}
  S_D(n,k) = \frac{1}{2^kk!}\left[\sum_{\ell=0}^kD(n,\ell)\binom{n-\ell}{k-\ell}+n\cdot2^{n-1}(k-1)!S(n-1,k-1)\right],
\end{equation}
where $D(n,\ell)$ is the number of permutations in $\mathcal{D}_n$, which is the set of all signed permutations with even signs in $\B_n$, with $\ell$ descents,
see \cite[Section 11.5.4]{peter15} for more details.
As 
for the type D $q$-Stirling numbers of the second kind $S_D[n,k]$ defined by~\eqref{def+stirling+D+q}, we leave it as an open problem 
to  find  a $q$-analogue of identity~\eqref{eq+eulerian+stir+D} in the spirit of identities~\eqref{eq+eulerian+stir+A+q}  and~\eqref{eq+eulerian+stir+B+q+trans} for types A and B.

\end{rem}

\subsection{Falling factorials and $q$-Stirling numbers of the second kind in type D}

For the Stirling numbers of the second kind $S(n,k)$,
a well-known identity involving the connection between the standard basis of the polynomial ring $\R_n[t]$
and the basis consisting of falling factorials is that, for $n \in \N$ and $t \in \C$ , we have
\begin{equation}\label{eq+basis+falling+stir+A}
  t^n = \sum_{k=0}^{n} S(n,k)(t)_k,
\end{equation}
where $(t)_k=t(t-1)\cdots\left(t-(k-1)\right)$ and $(t)_0:=1$.

A classical combinatorial interpretation for~\eqref{eq+basis+falling+stir+A} pointed out that
$t^n$ is the number of all mappings from the set $[n]$ to the set $[t]$ ($t \in \N^{+}$)
and $S(n,k)(t)_k$ is the number of surjections that map the set $[n]$ to all $k$-subsets of $[t]$,
see \cite[Eq. (1.96)]{St97} for more details.
Similarly, for the Stirling numbers of the second kind in types B and D,
Bagno et al. \cite[Theorems 5.1 and 5.4]{BBG19} used a geometric method to obtain the following identities:

\begin{equation}\label{eq+basis+falling+stir+B}
  t^n = \sum_{k=0}^{n} S_B(n,k)(t)_k^B,
\end{equation}
where $(t)_k^B=(t-1)(t-3)\cdots\left(t-(2k-1)\right)$ and $(t)_0^B:=1$, and
\begin{equation}\label{eq+basis+falling+stir+D}
  t^n = \sum_{k=0}^{n} S_D(n,k)(t)_k^D + n\left((t-1)^{n-1} - (t)_{n-1}^D \right),
\end{equation}
where $(t)_k^D$ is defined as
\begin{equation*}
     (t)_{k}^D
 := \left\{
   \begin{array}{lll}
      1,                                      &  k=0;          \\
      (t-1)(t-3)\cdots(t-(2k-1)),             &  1 \leq k < n; \\
      (t-1)(t-3)\cdots(t-(2n-3))(t-(n-1)),  &  k=n.          \\
   \end{array}
   \right.
\end{equation*}

Naturally, those $q$-analogues for identities~\eqref{eq+basis+falling+stir+A} and~\eqref{eq+basis+falling+stir+B} were also given as
\begin{equation}\label{eq+basis+falling+stir+A+q}
  t^n = \sum_{k=0}^{n} S[n,k](t)_{k,q},
\end{equation}
where $(t)_{k,q}=t(t-[1]_q)\cdots\left(t-[k-1]_q\right)$ and $(t)_{0,q}:=1$ (see Carlitz~\cite[Eq. (3.1)]{Car48}),
and
\begin{equation}\label{eq+basis+falling+stir+B+q}
  t^n = \sum_{k=0}^{n} S_B[n,k](t)_{k,q}^B,
\end{equation}
where $(t)_{k,q}^B=(t-[1]_q)(t-[3]_q)\cdots\left(t-[2k-1]_q\right)$ and $(t)_{0,q}^B:=1$
(see Sagan and Swanson~\cite[Corallary 2.4]{SS22} and Komatsu et al.~\cite[Theorem 2.2]{KBG22}).

Define a \emph{$q$-falling factorial of type D} by
\begin{equation*}
     (t)_{k,q}^D
 := \left\{
   \begin{array}{lll}
      1,                                               &  k=0;          \\
      (t-[1]_q)(t-[3]_q)\cdots(t-[2k-1]_q),            &  1 \leq k < n; \\
      (t-[1]_q)(t-[3]_q)\cdots(t-[2n-3]_q)(t-[n-1]_q), &  k=n.          \\
   \end{array}
   \right.
\end{equation*}
We have a $q$-analogue of identity~\eqref{eq+basis+falling+stir+D} as follows.

\begin{prop}\label{prop+stir+D+q}
Let $S_D[n,k]$ be defined by~\eqref{def+stirling+D+q}. Then 
\begin{equation*}
  t^n = \sum_{k=0}^nS_D[n,k](t)_{k,q}^D+n(t-1)^{n-1}-[n]_{q}\,q^{n-1}(t)_{n-1,q}^D
\end{equation*}
for $n \in \N$ and $t \in \C$.
\end{prop}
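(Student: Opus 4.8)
The plan is to mimic the classical proof of \eqref{eq+basis+falling+stir+A} and its type B analogue \eqref{eq+basis+falling+stir+B+q} via the relation \eqref{rel+stirling+ABD+q}, which already packages $S_D[n,k]$ in terms of the type A and type B $q$-Stirling numbers. First I would substitute \eqref{rel+stirling+ABD+q}, namely $S_D[n,k]=S_B[n,k]-n\,[2]_q^{n-k-1}q^{n-k-1}S[n-1,k]_{q^2}$, into the sum $\sum_{k=0}^n S_D[n,k](t)_{k,q}^D$, splitting it into two pieces. The first piece, $\sum_{k=0}^n S_B[n,k](t)_{k,q}^D$, differs from the known expansion $\sum_{k=0}^n S_B[n,k](t)_{k,q}^B = t^n$ only in the top term $k=n$, since $(t)_{k,q}^D=(t)_{k,q}^B$ for $k<n$; so this piece equals $t^n + S_B[n,n]\bigl((t)_{n,q}^D-(t)_{n,q}^B\bigr)$, and $S_B[n,n]=1$ from the recurrence \eqref{def+stirling+B+q}. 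Here $(t)_{n,q}^D-(t)_{n,q}^B = (t)_{n-1,q}^D\bigl((t-[n-1]_q)-(t-[2n-1]_q)\bigr) = (t)_{n-1,q}^D([2n-1]_q-[n-1]_q) = q^{n-1}[n]_q\,(t)_{n-1,q}^D$, using $[2n-1]_q-[n-1]_q = q^{n-1}+q^n+\cdots+q^{2n-2}=q^{n-1}[n]_q$. That term is exactly the $-[n]_q q^{n-1}(t)_{n-1,q}^D$ correction appearing on the right-hand side of the proposition, so it cancels after moving it across.

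Next I would handle the second piece, $-n\sum_{k=0}^{n-1}[2]_q^{n-k-1}q^{n-k-1}S[n-1,k]_{q^2}\,(t)_{k,q}^D$ (the $k=n$ term vanishes since $S[n-1,n]_{q^2}=0$). For $k\le n-1$ we have $(t)_{k,q}^D=(t)_{k,q}^B=\prod_{i=1}^k(t-[2i-1]_q)$. The key observation is a change of variable: $[2i-1]_q=[i]_{q^2}\cdot[2]_q\cdot q^{?}$ is not quite right, so instead I would use the cleaner identity $t-[2i-1]_q = q^{i-1}[2]_q\bigl(t/(q^{i-1}[2]_q) - [i-1]_{q^2}\bigr)$ — more precisely, one checks $[2i-1]_q = q^{i-1}[2]_q[i-1]_{q^2} + [i-1]_q\cdot 0$... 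I would verify the correct factorization $[2i-1]_q - [2i-3]_q\cdot(\text{something})$; concretely the right statement is that $(t)_{k,q}^B$ relates to the type A $q$-falling factorial in the variable $q^2$ after rescaling $t$. The mechanism is exactly the one used to deduce \eqref{eq+basis+falling+stir+B+q} from \eqref{eq+basis+falling+stir+A+q}: setting $s = $ (an affine function of $t$) and rescaling the base to $q^2$ turns $\sum_k S[n-1,k]_{q^2}(s)_{k,q^2}$ into $s^{n-1}$, and unwinding the substitution identifies $s^{n-1}$ with $(t-1)^{n-1}$ up to the factors $[2]_q^{n-k-1}q^{n-k-1}$. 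Thus $-n\sum_{k=0}^{n-1}[2]_q^{n-k-1}q^{n-k-1}S[n-1,k]_{q^2}(t)_{k,q}^D = -n(t-1)^{n-1}$, which matches the remaining term on the right-hand side after moving $n(t-1)^{n-1}$ across.

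Assembling these: $\sum_{k=0}^n S_D[n,k](t)_{k,q}^D = t^n - q^{n-1}[n]_q(t)_{n-1,q}^D - n(t-1)^{n-1}$ (signs to be checked), which rearranges to exactly the claimed identity $t^n = \sum_{k=0}^n S_D[n,k](t)_{k,q}^D + n(t-1)^{n-1} - [n]_q q^{n-1}(t)_{n-1,q}^D$. The main obstacle I anticipate is the bookkeeping in the second paragraph: pinning down the exact affine substitution $t \mapsto s$ and the base change $q \mapsto q^2$ so that the product $\prod_{i=1}^k(t-[2i-1]_q)$ becomes $[2]_q^k q^{\binom{k}{2}\text{-type power}}\prod_{i=1}^k(s-[i-1]_{q^2})$ with the power of $q$ and $[2]_q$ matching $[2]_q^{n-k-1}q^{n-k-1}$ only after also accounting for the $n-1$ in the exponent (i.e. the global factor coming from $(t-1)^{n-1}=s^{n-1}$ under the substitution). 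I would derive the base identity $t-[2i-1]_q = [2]_q q^{i-1}\bigl(\tfrac{t-[something]}{[2]_q q^{i-1}}\bigr)$ carefully once, then let the telescoping of exponents take care of itself; once that lemma is in hand the rest is a direct substitution into \eqref{eq+basis+falling+stir+A+q} with $q$ replaced by $q^2$ and $t$ replaced by the appropriate affine image, followed by collecting the two correction terms.
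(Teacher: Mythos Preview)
Your approach is essentially identical to the paper's: it too substitutes \eqref{rel+stirling+ABD+q} into $\sum_k S_B[n,k](t)_{k,q}^D$, handles the $k=n$ discrepancy via $(t)_{n,q}^D-(t)_{n,q}^B=q^{n-1}[n]_q\,(t)_{n-1,q}^D$, and reduces the second sum to $n(t-1)^{n-1}$ by the type~A identity \eqref{eq+basis+falling+stir+A+q}. The substitution you were hunting for is exactly $t-[2i-1]_q = q[2]_q\bigl(\tfrac{t-1}{q[2]_q}-[i-1]_{q^2}\bigr)$, so that $(t)_{k,q}^D=(q[2]_q)^k\,(s)_{k,q^2}$ with $s=\tfrac{t-1}{q[2]_q}$, after which the powers of $q[2]_q$ combine with $[2]_q^{n-k-1}q^{n-k-1}$ to give $(q[2]_q)^{n-1}s^{n-1}=(t-1)^{n-1}$; note also that your final assembled line should read $t^n+q^{n-1}[n]_q(t)_{n-1,q}^D-n(t-1)^{n-1}$ (the sign you flagged).
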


\begin{proof}
From  equation~\eqref{rel+stirling+ABD+q} we derive the identity
\begin{equation}\label{rel+stirling+q+A+B+D}
     S_B[n,k]
  =  S_D[n,k]+n\cdot[2]_q^{n-k-1}q^{n-k-1}S[n-1,k]_{q^2}.
\end{equation}
Thus, multiplying  both sides of~\eqref{rel+stirling+q+A+B+D} by $(t)_{k,q}^D$  and  summing   over $0 \leq k \leq n$, we have
\begin{equation}\label{rel+stirling+q+A+B+D+GF}
     \sum_{k=0}^{n}S_B[n,k](t)_{k,q}^D
  =  \sum_{k=0}^{n}S_D[n,k](t)_{k,q}^D+\sum_{k=0}^{n-1}n\cdot[2]_q^{n-k-1}q^{n-k-1}S[n-1,k]_{q^2}(t)_{k,q}^D.
\end{equation}

First, for the left-hand side of~\eqref{rel+stirling+q+A+B+D+GF}, we have
\begin{align}\label{sum+stir+B+q}
      \sum_{k=0}^{n}S_B[n,k](t)_{k,q}^D
  &= \sum_{k=0}^{n-1}S_B[n,k](t)_{k,q}^D + S_B[n,n](t)_{n,q}^D-[n]_{q}\,q^{n-1}(t)_{n-1,q}^D + [n]_{q}\,q^{n-1}(t)_{n-1,q}^D \nonumber\\
  &= \sum_{k=0}^{n}S_B[n,k](t)_{k,q}^B + [n]_{q}\,q^{n-1}(t)_{n-1,q}^D                                                       \nonumber\\
  &= t^n + [n]_{q}\,q^{n-1}(t)_{n-1,q}^D,
\end{align}
where the second equality and last one use the facts $S_B[n,n]=1$ and
\begin{equation*}
(t)_{n,q}^B=(t)_{n,q}^D-[n]_{q}\,q^{n-1}(t)_{n-1,q}^D,
\end{equation*}
and identity~\eqref{eq+basis+falling+stir+B+q}, respectively.
In addition, for the second summation in the right-hand side of~\eqref{rel+stirling+q+A+B+D+GF}, we have
\begin{align}\label{sum+stir+D+q}
  &\quad \sum_{k=0}^{n-1}n\cdot[2]_q^{n-k-1}q^{n-k-1}S[n-1,k]_{q^2}(t)_{k,q}^D   \nonumber\\
  &=  n\cdot[2]_q^{n-1}q^{n-1}\sum_{k=0}^{n-1}S[n-1,k]_{q^2}
       \left(\frac{t-1}{[2]_{q}q}\right)\left(\frac{t-1}{[2]_{q}q}-[1]_{q^2}\right)\cdots
       \left(\frac{t-1}{[2]_{q}q}-[k-1]_{q^2}\right)                            \nonumber\\
  &=  n\cdot[2]_q^{n-1}q^{n-1}\left(\frac{t-1}{[2]_{q}q}\right)^{n-1}           \nonumber\\
  &=  n(t-1)^n,
\end{align}
where the second equality uses identity~\eqref{eq+basis+falling+stir+A+q}.
Combining~\eqref{rel+stirling+q+A+B+D+GF},~\eqref{sum+stir+B+q} and~\eqref{sum+stir+D+q}, we complete the proof.
\end{proof}

\section{Generalization to colored permutations}\label{section+algebraic+proof}

In this section, instead of proving Theorem \ref{thm+stirling+frobenius+B} by an algebraic proof,
we shall prove a more general identity.
Define the \emph{$r$-colored $q$-Stirling numbers of the second kind $S_{r}[n,k]$} by the recurrence relation
\begin{equation}\label{def+stirling+r}
  S_{r}[n,k]:= S_{r}[n-1,k-1] + [rk+1]_q\,S_{r}[n-1,k]
\end{equation}
with the initial conditions $S_{r}[0,k]=\delta_{0k}$.

It is not difficult to verify (see \cite[Theorem 1]{RW04} for a more general result) that
\begin{equation}\label{eq+basis+falling+stir+r+q}
    t^{n}
  = \sum_{k=0}^{n}S_r[n,k](t)_{k,q}^r,
\end{equation}
where $(t)_{k,q}^r=(t-[1]_q)(t-[r+1]_q)\cdots\left(t-[r(k-1)+1]_q\right)$ and $(t)_{0,q}^r:=1$.
Using Rook theory, Remmel and Wachs gave a combinatorial interpretation of
identity \eqref{eq+basis+falling+stir+r+q} in \cite[Theorem 7]{RW04}.

Substituting $t$ by $[rm+1]_{q}$ in \eqref{eq+basis+falling+stir+r+q} yields
\begin{equation*}
    [rm+1]_{q}^{n}
  = \sum_{k=0}^{n}q^{r\binom{k+1}{2}+(1-r)k}[r]_q^k\,[k]_{q^r}!S_r[n,k]
    {m \brack k}_{q^r},
\end{equation*}
which, by \eqref{q2}, is equivalent to the generating function identity,
\begin{equation}\label{eq+eulerian+sturling+color+q}
     \sum_{k=0}^n\frac{q^{r\binom{k+1}{2}+(1-r)k}[r]_q^k\,[k]_{q^r}!S_{r}[n,k]\,t^k}{\prod_{i=0}^k(1-tq^{ri})}
   = \sum_{m=0}^{\infty}[rm+1]_q^{n}\,t^{m}.
\end{equation}

The colored permutations group of $n$ letters with $r$ colors can be looked as the wreath product group
\begin{equation*}
\Z_r \wr \S_n = \Z^{r} \times \S_n,
\end{equation*}
which consists of all permutations $\pi \in [0,r-1]\times [n]$. Namely, the element in
$\Z_r \wr \S_n$ is thought of as $\pi = \pi_1^{z_1}\pi_2^{z_2} \cdots \pi_n^{z_n}$,
where $z_i \in [0,r-1]$ and $\pi_1\pi_2 \cdots \pi_n \in \S_n$.
Define the following total order relation on the elements of $\Z_r \wr \S_n$:
\begin{equation*}
n^{r-1}<\cdots<n^1<\cdots<1^{r-1}<\cdots<1^{1}<0<1<\cdots<n,
\end{equation*}
where $k^{0}$ is replaced with $k$ for $k \in [n]$.

An integer $i \in \{0\} \cup [n-1]$ is called a \emph{descent} of $\pi \in  \Z_r \wr \S_n$
if $\pi_i^{z_i} > \pi_{i+1}^{z_{i+1}}$, where $\pi_0^{z_0}=0$.
Let  $\Des_r(\pi)$ denote the descent set of $\pi \in  \Z_r \wr \S_n$ and  $\des_r(\pi)$ the number of descents of $\pi$, i.e., $|\Des_r(\pi)|$.
The $r$-colored Eulerian number $A_{n,k}^r$ is the number of all colored permutations in $\Z_r \wr \S_n$ with $k$ descents.
For each $\pi \in \Z_r \wr \S_n$, as in \cite{AR01}, define the \emph{$r$-flag-major index} of $\pi$ by
\begin{equation}
  \fmaj_r(\pi) := r\sum_{i \in \Des_r(\pi)}i+\sum_{i=1}^nz_i.
\end{equation}
A $q$-analogue of the $r$-colored Eulerian polynomial $A_n^r(t,q)$ is defined by
\begin{equation}\label{def+eulerian+color+q}
  A_n^r(t,q) : = \sum_{\pi \in \Z_r \wr \S_n}t^{\des_r(\pi)}q^{\fmaj_r(\pi)}
  =\sum_{k=0}^nA_{n,k}^r(q)t^k.
\end{equation}
When $r$ takes $1$ and $2$, \eqref{def+eulerian+color+q} reduces to
\eqref{def+q+eulerian+poly+A} and \eqref{def+q+eulerian+poly+B}, respectively.
The following Carlitz's identity for $\Z_r \wr \S_n$ was proved in \cite[Proposition 8.1]{BZ11} and \cite[Theorem 9]{CM11}
\begin{equation}\label{carlitz-color}
     \frac{A_n^{r}(t,q)}{\prod_{i=0}^n(1-tq^{ri})}
   = \sum_{m=0}^{\infty}[rm+1]_q^{n}\,t^{m}.
\end{equation}

Combining \eqref{eq+eulerian+sturling+color+q} and \eqref{carlitz-color} we obtain the following identity.

\begin{prop}\label{prop+eulerian+stirling+fro}
For the polynomials $S_r[n,k]$ in~\eqref{def+stirling+r} and $A_n^{r}(t,q)$ in~\eqref{def+eulerian+color+q},
the $q$-Frobenius formula holds
\begin{equation*}
    \frac{A_n^{r}(t,q)}{\prod_{i=0}^n(1-tq^{ri})}
  = \sum_{k=0}^n\frac{q^{r\binom{k+1}{2}+(1-r)k}[r]_q^k\,[k]_{q^r}!S_{r}[n,k]t^k}{\prod_{i=0}^k(1-tq^{ri})}.
\end{equation*}
\end{prop}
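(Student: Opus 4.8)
The plan is to combine the two generating-function identities stated just above the proposition, namely \eqref{eq+eulerian+sturling+color+q} and \eqref{carlitz-color}. Each of these expresses a certain rational function of $t$ and $q$ as the \emph{same} power series $\sum_{m\geq 0}[rm+1]_q^n\,t^m$. Therefore the left-hand side of \eqref{carlitz-color} and the right-hand side of \eqref{eq+eulerian+sturling+color+q} must be equal as formal power series in $t$, which is precisely the claimed $q$-Frobenius formula. So the proof is essentially one line: transitivity of equality through the common series $\sum_{m=0}^{\infty}[rm+1]_q^{n}t^{m}$.

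For completeness I would spell out why \eqref{eq+eulerian+sturling+color+q} holds, since the excerpt derives it somewhat tersely. First I would substitute $t\mapsto[rm+1]_q$ into the polynomial identity \eqref{eq+basis+falling+stir+r+q}, $[rm+1]_q^n=\sum_{k=0}^n S_r[n,k](t)_{k,q}^r\big|_{t=[rm+1]_q}$. The key computational point is to recognize the evaluated $q$-falling factorial: one checks that
\begin{equation*}
\prod_{j=0}^{k-1}\bigl([rm+1]_q-[rj+1]_q\bigr)
= q^{r\binom{k}{2}+k}[r]_q^k\,[k]_{q^r}!\,{m\brack k}_{q^r},
\end{equation*}
because $[rm+1]_q-[rj+1]_q=q^{rj+1}[r(m-j)]_q=q^{rj+1}[r]_q[m-j]_{q^r}$, and multiplying these over $j=0,\dots,k-1$ produces the power of $q$ (the exponent $r\binom{k}{2}+k$ being $\sum_{j=0}^{k-1}(rj+1)$, which after the shift matches $r\binom{k+1}{2}+(1-r)k$ once one is careful about the indexing convention in $(t)_{k,q}^r$), the factor $[r]_q^k$, the $q^r$-factorial $[k]_{q^r}!=\prod_{i=1}^k[i]_{q^r}$ from the $[m-j]_{q^r}$ telescoping into $[m]_{q^r}[m-1]_{q^r}\cdots[m-k+1]_{q^r}$, and the $q^r$-binomial ${m\brack k}_{q^r}=[m]_{q^r}!/([k]_{q^r}![m-k]_{q^r}!)$. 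Then I would apply the $q$-binomial theorem \eqref{q2} with the base $q^r$ and variable $t$ to turn $\sum_{m\geq 0}{m\brack k}_{q^r}t^m$ into $t^k/\prod_{i=0}^k(1-tq^{ri})$, which yields \eqref{eq+eulerian+sturling+color+q} after summing over $k$ and multiplying by the $m$-independent constants.

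The main (and only real) obstacle is bookkeeping the exponent of $q$ and confirming the constant $q^{r\binom{k+1}{2}+(1-r)k}[r]_q^k$: one must track exactly where the exponent shift between $(t)^r_{k,q}=(t-[1]_q)(t-[r+1]_q)\cdots(t-[r(k-1)+1]_q)$ and the product $\prod_{j=0}^{k-1}$ comes from, and verify $r\binom{k}{2}+k = r\binom{k+1}{2}+(1-r)k$ — indeed $r\binom{k+1}{2}+(1-r)k = r\binom{k}{2}+rk+(1-r)k = r\binom{k}{2}+k$, so they agree. Apart from this elementary identity and the two cited generating-function results, nothing deep is needed; the proposition is a formal consequence of putting \eqref{eq+eulerian+sturling+color+q} and \eqref{carlitz-color} side by side, and I would state it as such in two sentences.
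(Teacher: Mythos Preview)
Your proposal is correct and matches the paper's approach exactly: the paper also derives the proposition simply by combining \eqref{eq+eulerian+sturling+color+q} and \eqref{carlitz-color}, since both equal $\sum_{m\geq 0}[rm+1]_q^n t^m$. Your supplementary unpacking of \eqref{eq+eulerian+sturling+color+q} (the factorization $[rm+1]_q-[rj+1]_q=q^{rj+1}[r]_q[m-j]_{q^r}$ and the exponent check $r\binom{k}{2}+k=r\binom{k+1}{2}+(1-r)k$) fills in precisely the steps the paper left terse, and is correct.
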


The following result is  a $q$-analogue of Theorem 6.6 in \cite{BBG19}
about an identity between the $r$-colored Stirling numbers of the second kind $S_r(n,k)$
(the sequence defined by~\eqref{def+stirling+r} when $q=1$, see also\cite[Section 6.1]{BBG19})
and $r$-colored Eulerian numbers $A_{n,k}^r$.

\begin{thm}\label{thm+stirling+frobenius+r}
For the $r$-colored $q$-Stirling numbers of the second kind $S_{r}[n,k]$ in~\eqref{def+stirling+r}
and $q$-Eulerian numbers $A_{n,k}^r(q)$ in~\eqref{def+eulerian+color+q}, we have the identity
\begin{equation}\label{r-stirling-eulerian}
    q^{r\binom{k+1}{2}+(1-r)k}[r]_q^k\,[k]_{q^r}!S_{r}[n,k]
  = \sum_{\ell=0}^{k}q^{rk(k-\ell)}A_{n,\ell}^{r}(q)
    {n-\ell \brack k-\ell}_{q^r}
\end{equation}
for $0 \leq k \leq n$.
\end{thm}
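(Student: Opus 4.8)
The plan is to extract the identity \eqref{r-stirling-eulerian} from the $q$-Frobenius formula of Proposition~\ref{prop+eulerian+stirling+fro} by equating coefficients, exactly as one passes between a generating-function identity and its "finite" shadow. First I would rewrite the right-hand side of that proposition's formula by expanding each factor $1/\prod_{i=0}^{k}(1-tq^{ri})$ using the $q$-binomial theorem \eqref{q2} (with base $q^{r}$ and $N=k+1$), obtaining
\[
  \frac{t^{k}}{\prod_{i=0}^{k}(1-tq^{ri})}
  = \sum_{j\ge 0}{k+j\brack j}_{q^{r}}t^{k+j}
  = \sum_{m\ge k}{m\brack m-k}_{q^{r}}t^{m}.
\]
Doing the same on the left-hand side with $A_n^r(t,q)=\sum_\ell A^r_{n,\ell}(q)t^\ell$ and the expansion of $1/\prod_{i=0}^{n}(1-tq^{ri})$ gives another power series in $t$; comparing the coefficient of $t^{m}$ on both sides yields a polynomial identity in $q$ valid for every $m\ge 0$.

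The cleaner route, which I would actually follow, is to avoid the infinite product on the left entirely: substitute $t=[rm+1]_q$-type specializations is circular here, so instead I compare coefficients directly in Proposition~\ref{prop+eulerian+stirling+fro} after multiplying both sides by the common denominator $\prod_{i=0}^{n}(1-tq^{ri})$. On the left this clears to $A_n^r(t,q)$, a polynomial of degree $n$ in $t$. On the right, the $k$-th summand becomes
\[
  q^{r\binom{k+1}{2}+(1-r)k}[r]_q^{k}[k]_{q^{r}}!\,S_r[n,k]\;t^{k}\prod_{i=k+1}^{n}(1-tq^{ri}),
\]
and expanding the finite product via \eqref{q1} (base $q^{r}$, $N=n-k$, variable $tq^{r(k+1)}$) produces explicit $q$-binomial coefficients. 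Extracting $[t^{k}]$ from the whole right-hand side (only terms with $\ell\le k$ contribute, from summand index $\ell$) and matching with $A^r_{n,k}(q)=[t^{k}]A_n^r(t,q)$ gives a relation; then I would invert the resulting lower-triangular system — or, more efficiently, apply the standard $q$-binomial inversion — to solve for $[r]_q^k[k]_{q^r}!S_r[n,k]$ in terms of the $A^r_{n,\ell}(q)$, landing on \eqref{r-stirling-eulerian} after using the symmetry ${n-\ell\brack n-k}_{q^r}={n-\ell\brack k-\ell}_{q^r}$ and simplifying the powers of $q$.

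The main obstacle is bookkeeping of the $q$-powers: the exponents $r\binom{k+1}{2}+(1-r)k$, the $rk(k-\ell)$ in the target, the $r\binom{j}{2}$ coming from \eqref{q1}, and the shifts $q^{r(k+1)}$ from re-indexing the product must all be tracked so that the net exponent collapses to $q^{rk(k-\ell)}$; a single sign or off-by-one error in the exponent of $q$ in \eqref{q1} versus \eqref{q2} will derail the match. A secondary subtlety is justifying the coefficient comparison rigorously — both sides of Proposition~\ref{prop+eulerian+stirling+fro}, after clearing denominators, are genuine polynomials in $t$ (the $S_r[n,k]$ vanish for $k>n$), so this is legitimate, but I would state it explicitly. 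Once the exponent arithmetic is organized, the inversion step is routine, and the final identity \eqref{r-stirling-eulerian} follows; specializing $r=2$ recovers Theorem~\ref{thm+stirling+frobenius+B}, as noted in the introduction.
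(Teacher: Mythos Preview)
Your approach is correct but differs from the paper's route. Both start from Proposition~\ref{prop+eulerian+stirling+fro}, but then diverge: you clear the common denominator $\prod_{i=0}^{n}(1-tq^{ri})$, expand each remaining finite product $\prod_{i=\ell+1}^{n}(1-tq^{ri})$ via \eqref{q1}, extract the coefficient of $t^{k}$ to obtain a lower-triangular relation expressing $A^{r}_{n,k}(q)$ in terms of the numbers $q^{r\binom{\ell+1}{2}+(1-r)\ell}[r]_q^{\ell}[\ell]_{q^{r}}!\,S_{r}[n,\ell]$, and then invert by $q$-binomial inversion. The paper instead multiplies the \emph{target} identity \eqref{r-stirling-eulerian} by $t^{k}/\prod_{i=0}^{k}(1-tq^{ri})$ and sums over $k$; the left side becomes exactly the right side of Proposition~\ref{prop+eulerian+stirling+fro}, and after swapping the order of summation on the right everything reduces to the single auxiliary identity
\[
  \frac{1}{\prod_{i=0}^{n}(1-tq^{ri})}
  = \sum_{k=\ell}^{n}\frac{(tq^{rk})^{k-\ell}}{\prod_{i=0}^{k}(1-tq^{ri})}{n-\ell\brack k-\ell}_{q^{r}}
  \qquad(0\le \ell\le n),
\]
which is verified by comparing coefficients of $t^{m}$ and invoking the $q$-Chu--Vandermonde summation. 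Your route requires carrying the inversion through explicitly and hence more exponent bookkeeping (exactly the obstacle you flag); the paper's route replaces that inversion by one clean rational-function identity plus a single appeal to $q$-Chu--Vandermonde, so the $q$-power accounting is lighter. Underneath, both arguments rest on the same $q$-binomial identity, just packaged as ``invert a triangular system'' versus ``verify a partial-fraction-type expansion.''
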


\begin{proof}
Summing for both sides of \eqref{r-stirling-eulerian} multiplying by $t^k/\prod_{i=0}^k(1-tq^{ri})$ over all $k$,
it is clear that \eqref{r-stirling-eulerian} is equivalent to
\begin{equation*}
    \sum_{k=0}^n\frac{q^{r\binom{k+1}{2}+(1-r)k}[r]_q^k\,[k]_{q^r}!S_{r}[n,k]t^k}{\prod_{i=0}^k(1-tq^{ri})}
  = \sum_{k=0}^n\sum_{\ell=0}^{k}\frac{q^{rk(k-\ell)}A_{n,\ell}^{r}(q)t^k}{\prod_{i=0}^k(1-tq^{ri})}
    {n-\ell \brack k-\ell}_{q^r}.
\end{equation*}
By Proposition \ref{prop+eulerian+stirling+fro}, it is sufficient to show that
\begin{equation*}
    \frac{A_n^{r}(t,q)}{\prod_{i=0}^n(1-tq^{ri})}
  = \sum_{k=0}^n\sum_{\ell=0}^{k}\frac{q^{rk(k-\ell)}A_{n,\ell}^{r}(q)t^k}{\prod_{i=0}^k(1-tq^{ri})}
    {n-\ell \brack k-\ell}_{q^r},
\end{equation*}
or equivalently,
\begin{equation*}
    \frac{A_n^{r}(t,q)}{\prod_{i=0}^n(1-tq^{ri})}
  = \sum_{\ell=0}^{n}A_{n,\ell}^{r}(q)t^{\ell}
    \sum_{k=\ell}^n\frac{(tq^{r \cdot k})^{k-\ell}}{\prod_{i=0}^k(1-tq^{ri})}
    {n-\ell \brack k-\ell}_{q^r},
\end{equation*}
which will follow from the following identity
\begin{equation}\label{eq+q+extension}
    \frac{1}{\prod_{i=0}^n(1-tq^{ri})}
  = \sum_{k=\ell}^n\frac{(tq^{r \cdot k})^{k-\ell}}{\prod_{i=0}^k(1-tq^{ri})}{n-\ell \brack k-\ell}_{q^r}
\end{equation}
for $0 \leq \ell \leq n$.
That is to say, the index $\ell$ does not affect the summation in the right-hand side of~\eqref{eq+q+extension}.
Substituting $q^r\to q$ and applying \eqref{q2} to extract the coefficients of $t^m$
on both sides of~\eqref{eq+q+extension} we obtain
\begin{align*}
     {n+m \brack m}_{q}
  &= \sum_{k=\ell}^n{n-\ell \brack k-\ell}_{q}
     {k+m-(k-\ell) \brack m-(k-\ell)}_{q}q^{ k (k-\ell)}\\
  &= \sum_{k=0}^{n-\ell}{n-\ell\brack k}_{q}{m+\ell \brack m-k}_{q}q^{k(k+\ell)},
\end{align*}
which is a $q$-analogue of Chu-Vandermonde summation \cite[Eq. (3.3.10)]{An76}.
\end{proof}

Following the recurrence \eqref{def+stirling+r}, we have
$S_{1}[n,k]=S[n+1,k+1]$ and $S_{2}[n,k]=S_B[n,k]$.
When $r=1$ and $r=2$, identity \eqref{r-stirling-eulerian} (Theorem \ref{thm+stirling+frobenius+r}) reduces to
\eqref{eq+eulerian+stir+A+q} and \eqref{eq+eulerian+stir+B+q+trans}, respectively.
Indeed, the case $r=2$ is obvious, i.e., Theorem \ref{thm+stirling+frobenius+B} is a special case of Theorem \ref{thm+stirling+frobenius+r}.
For $r=1$, Theorem \ref{thm+stirling+frobenius+r} reduces to
\begin{equation}\label{eq+euler+stirling+A}
    q^{\binom{k+1}{2}}[k]_{q}!\,S[n+1,k+1]
  = \sum_{\ell=0}^{k}q^{k(k-\ell)}A_{n,\ell}(q)
    {n-\ell \brack k-\ell}_{q},
\end{equation}
which is equivalent to identity~\eqref{eq+eulerian+stir+A+q}.
By \eqref{eq+eulerian+stir+A+q}, the right-hand side of \eqref{eq+euler+stirling+A} equals
\begin{align*}
  &\quad  \sum_{\ell=0}^{k}q^{k(k-\ell)}A_{n,\ell}(q){n-\ell \brack k-\ell}_{q}                   \\
  &= \sum_{\ell=1}^{k}q^{k(k-(\ell-1))}A_{n,\ell-1}(q)
      {n-(\ell-1) \brack k-(\ell-1)}_{q}+A_{n,k}(q)                                               \\
  &= \sum_{\ell=1}^{k}q^{k(k+1-\ell)}A_{n,\ell-1}(q)\left(q^{k+1-\ell}{n-\ell \brack k+1-\ell}_{q}
     + {n-\ell \brack k-\ell}_{q}\right)+A_{n,k}(q)                                               \\
  &= \sum_{\ell=1}^{k+1}q^{(k+1)(k+1-\ell)}A_{n,\ell-1}(q){n-\ell \brack k+1-\ell}_{q}
     + q^k\sum_{\ell=1}^{k}q^{k(k-\ell)}A_{n,\ell-1}(q){n-\ell \brack k-\ell}_{q}                 \\
  &= q^{\binom{k+1}{2}}[k+1]_{q}!S[n,k+1]+ q^kq^{\binom{k}{2}}[k]_{q}!S[n,k],
\end{align*}
which yields~\eqref{eq+euler+stirling+A} by recurrence relation~\eqref{rec+stirling+q+A} of $S[n,k]$.
Inversely, starting from~\eqref{eq+euler+stirling+A},
the above last equality shows that~\eqref{eq+eulerian+stir+A+q}
follows from~\eqref{eq+euler+stirling+A} by induction on $k$ for fixed $n$.

In addition, by \eqref{eq+eulerian+stir+A+q} and \eqref{eq+q+extension}, we have
the following $q$-Frobenius formula \cite[Eq. (4.1)]{Gar79} related to $q$-Stirling numbers of the second kind
and $q$-Eulerian polynomials of type~A:
\begin{equation}\label{eq+eulerian+stir+A+q+frac}
    \frac{tA_n(t,q)}{\prod_{i=0}^n(1-tq^{i})}
  = \sum_{k=0}^n\frac{q^{\binom{k}{2}}[k]_{q}!S[n,k]t^k}{\prod_{i=0}^k(1-tq^{i})}.
\end{equation}
Following the above discussion, it is clear that identity \eqref{eq+eulerian+stir+A+q+frac} is a special case of
Proposition \ref{prop+eulerian+stirling+fro} for $r=1$.

\begin{rem}
Similar to the combinatorial proofs of \eqref{eq+eulerian+stir+A+q} and \eqref{eq+eulerian+stir+B+q+trans}, 
it is natural to  ask for a combinatorial proof of identity \eqref{r-stirling-eulerian}.
One difficulty for such a proof is that 
a counterpart  of the $q$-symmetry~\eqref{q-symmetry}  is missing  for $A_{n,k}^r(q)$. Note that the $r$-colored Eulerian polynomials 
$A_{n}^r(t,1)$ are not symmetric for $r\geq3$.
We leave it as an open problem to give a combinatorial proof of identity \eqref{r-stirling-eulerian}.
\end{rem}

\section*{Acknowledgements}
The first author was supported by the \emph{China Scholarship Council}.
This work was done during his visit  at  Universit\'e Claude Bernard Lyon 1 in 2022-2023.


\begin{thebibliography}{13}


\bibitem{AR01}
R.\,M. Adin, Y. Roichman,
The flag major index and group actions on polynomial rings,
European J. Combin. 22 (4) (2001) 431--446.

\bibitem{An76}
G.\,E. Andrews,
The theory of partitions,
Reprint of the 1976 original, Cambridge Mathematical Library, Cambridge University Press, Cambridge, 1998.

\bibitem{BBG19}
E. Bagno, R. Biagioli, D. Garber,
Some identities involving second kind Stirling numbers of types B and D,
Electron. J. Combin. 26 (3) (2019) \#P3.9.

\bibitem{BGK22}
E. Bagno, D. Garber, T. Komatsu,
A $q,r$-analogue for the Stirling numbers of the second kind of Coxeter groups of type B,
Pure Math. Appl. (PU.M.A.) 30 (1) (2022)  8--16.

\bibitem{Bo04}
M. B\'ona,
Combinatorics of Permutations,
Chapman $\&$ Hall$\backslash$CRC, 2004.

\bibitem{BZ11}
R. Biagioli, J. Zeng,
Enumerating wreath products via Garsia-Gessel bijections,
European J. Combin. 32 (4) (2011) 538--553.



\bibitem{Car48}
L. Carlitz,
$q$-Bernouli numbers and polynomials,
Duke Math. J. 15 (1948) 987--1000.



\bibitem{CG07}
C.-O. Chow, I.\,M. Gessel,
On the descent numbers and major indices for the hyperoctahedral group,
Adv. in Appl. Math. 38 (2007) 275--301.

\bibitem{CM11}
C.-O. Chow, T. Mansour,
A Carlitz identity for the wreath product $\Z_r \wr \S_n$,
Adv. in Appl. Math. 47 (2011) 199--215.

\bibitem{DL94}
I. Dolgachev, V. Lunts,
A character formula for the representation of the Weyl group in the cohomology of the associated toric variety,
J. Algebra 168 (3) (1994) 741--772.


\bibitem{Fo10}
D. Foata,  
Eulerian polynomials: from Euler's time to the present. 
 The legacy of Alladi Ramakrishnan in the mathematical sciences, 253--273, Springer, New York, 2010.

\bibitem{Gar79}
A.\,M. Garsia,
On the ``maj'' and ``inv'' $q$-analogue of Eulerian numbers,
Linear Multilinear Algebra 8 (1979) 21--34.


\bibitem{IKZ08}
M. Ishikawa, A. Kasraoui, J. Zeng, Euler–Mahonian statistics on ordered set partitions, SIAM J. Discrete Math. 22 (3) (2008) 1105–1137.

\bibitem{KZ09}
A. Kasraoui, J. Zeng,
Euler-Mahonian statistics on ordered set partitions. II, 
J. Combin. Theory Ser. A 116 (3) (2009)  539–563.

\bibitem{KBG22}
T. Komatsu, E. Bagno, D. Garber,
Analytic aspects of $q,r$-analogue of poly-Stirling numbers of both kinds,
arXiv:2209.06674v2.

\bibitem{KZ02}
G. Ksavrelof, J. Zeng, Nouvelles statistiques de partitions pour les $q$-nombres de Stirling de seconde esp\`ece, Discrete Math. 256 (3) (2002) 743–758. 

\bibitem{Mac60}
P.\,A. MacMahon,
Combinatory Analysis,
Two volumes (bound as one) Chelsea Publishing, Co., New York, 1960.

\bibitem{peter15}
T.\,K. Petersen,
Eulerian Numbers,
Birkh\"auser Adv. Texts Basler Lehrb\"ucher, Birkh\"auser/Springer, New York, 2015.

\bibitem{RW04}
J.\,B. Remmel, M. Wachs,
Rook Theory, Generalized Stirling numbers and ($p,q$)-analogues,
Electron. J. Combin. 11 (1) (2004) \#R84.

\bibitem{RW15}
J.\,B. Remmel, A.\,T. Wilson,
An extension of MacMahon's equidistribution theorem to ordered set parptitions,
J. Combin. Theory Ser. A 134 (2015) 242--277.

\bibitem{SS22}
B.\,E. Sagan, J.\,P. Swanson,
$q$-Stirling numbers in type B,
European J. Combin. 118 (2024), Paper No. 103899.


\bibitem{St97}
R. Stanley,
Enumerative Combinatorics, vol. I,
Cambridge Studies in Advanced Mathematics, vol. 49, Cambridge University Press, Cambridge, 1997.

\bibitem{Stein20}
E. Steingr\'imsson,
Statistics on ordered partitions of sets,
J. Comb. 11 (3) (2020) 557--574.



\bibitem{Sut00}
R. Suter,
Two analogues of a classical sequence,
J. Int. Seq. 3 (2000), Article 0.1.8, 13pp.

\bibitem{SW21}
J.\,P. Swanson, N.\,R. Wallach,
Harmonic differential forms for pseudo-reflection groups II. Bi-degree bounds,
Comb. Theory 3 (3) (2023), \#17.

\bibitem{Za81}
T. Zaslavsky,
The Geometry of Root Systems and Signed Graphs,
Amer. Math. Monthly 88 (2)  (1981) 88--105.

\bibitem{ZZ94}
J. Zeng, C.\,G. Zhang,
A $q$-analog of Newton's series, Stirling functions and Eulerian functions,
Results Math. 25 (3--4) (1994) 370--391.

\end{thebibliography}
\end{document}